\documentclass[11pt]{amsart}
\usepackage[foot]{amsaddr}
\usepackage{enumerate}
\usepackage{caption,subcaption}

\usepackage{ifthen}

\usepackage{amsmath,amsthm,amssymb,mathtools,amsfonts}
\usepackage{pgf}
\usepackage{graphicx}
\usepackage{tikz}
\usetikzlibrary{backgrounds, patterns, shapes, calc, decorations.pathreplacing, decorations.pathmorphing}

\tikzset{%
    vtx/.style={draw,circle,very thin,white,fill=black,inner sep=0pt,text width=8pt},
    pth/.style={draw,decorate,decoration={snake},thick},
    edg/.style={ultra thick},
    thinedg/.style={thick},
    dashedg/.style={ultra thick, dashed},    
    smedg/.style={very thin},
    polyg/.style={very thin,fill=white},
    dotedg/.style={ultra thick,dotted},
    ellip/.style={very thin,black,fill=white},
    spedg/.style={rounded corners,line width=5pt,purple!50}, 
}
\tikzset{/pgf/foreach/parse=true}
\pgfdeclarelayer{backback}
\pgfdeclarelayer{back}
\pgfdeclarelayer{fore}
\pgfdeclarelayer{forefore}
\pgfsetlayers{backback,back,main,fore,forefore}

\newcommand{\ovaldiagram}{%
    \useasboundingbox (-3.0,0) rectangle (3.0,5.5);
    \def\bigrad{1.2};
    \def\smrad{0.7};
    \coordinate (u) at (-2.0,0.5);
    \coordinate (v) at (2.0,0.5);
    \coordinate (Wuv) at (0,2.0);
    \coordinate (Wu) at (-1.5,4.0);
    \coordinate (Wv) at (1.5,4.0);
    \coordinate (WuvL) at ($(Wuv)-(0.9*\bigrad,0)$);
    \coordinate (WuvR) at ($(Wuv)+(0.9*\bigrad,0)$);
    \coordinate (WuL) at ($(Wu)-(0.9*\smrad,0)$);
    \coordinate (WuR) at ($(Wu)+(0.9*\smrad,0)$);
    \coordinate (WvL) at ($(Wv)-(0.9*\smrad,0)$);
    \coordinate (WvR) at ($(Wv)+(0.9*\smrad,0)$);
    \begin{pgfonlayer}{fore}
        \draw (u) node[vtx,label={left:$u$}] {};
	\draw (v) node[vtx,label={right:$v$}] {};
        \draw[ellip] (Wuv) ellipse ({\bigrad} and {\smrad});
        \draw[ellip,rotate=60] (Wu) ellipse ({\bigrad} and {\smrad});
        \draw[ellip,rotate=-60] (Wv) ellipse ({\bigrad} and {\smrad});
        \draw (Wu) node[xshift=-7pt,yshift=20pt,label={left:$W_u$}] {};
        \draw (Wv) node[xshift=7pt,yshift=20pt,label={right:$W_v$}] {};
        \draw (Wuv) node[yshift=-15pt,label={below:$W_{uv}$}] {};
    \end{pgfonlayer}
    \begin{pgfonlayer}{main}
        \draw[edg] (u) -- (v);
    \end{pgfonlayer}	
    \begin{pgfonlayer}{back}
        \draw[polyg] (u) -- (WuvL) -- (WuvR) -- (u);
        \draw[polyg] (v) -- (WuvL) -- (WuvR) -- (v);
        \draw[polyg] (u) -- (WuL) -- (WuR) -- (u);
        \draw[polyg] (v) -- (WvL) -- (WvR) -- (v);
    \end{pgfonlayer}
}

\newtheorem{theorem}{Theorem}
\newtheorem{lemma}[theorem]{Lemma}

\newtheorem{definition}[theorem]{Definition}
\newtheorem{fact}[theorem]{Fact}
\newtheorem{observation}[theorem]{Observation}
\newtheorem{conjecture}[theorem]{Conjecture}
\newtheorem{question}[theorem]{Question}
\newtheorem*{remark}{Remark}

\newcommand{\C}{{\mathcal C}}
\newcommand{\K}{{\mathcal K}}

\keywords{graph colorings, planar graphs}
\subjclass[2020]{05C15,05C10,05C35}

\begin{document}
\title{Bounds for B-coloring planar and outerplanar graphs}
\author[R.R. Martin]{Ryan R. Martin$^1$}
\address[1]{Department of Mathematics, Iowa State University, Ames, Iowa, USA}
\email[1]{rymartin@iastate.edu}
\author[M. Ruszink\'o]{Mikl\'os Ruszink\'o$^{2,3,4}$}
\address[2]{HUN‐REN Alfr\'ed R\'enyi Institute of Mathematics, Budapest, Hungary}
\address[3]{Department of Sciences and Engineering, Sorbonne University Abu Dhabi, SAFIR, CEFAMSDS, Abu Dhabi, UAE}
\address[4]{Faculty of Information Technology and Bionics, P\'azm\'any P\'eter Catholic University}
\author[G.N. S\'ark\"ozy]{G\'{a}bor N. S\'ark\"ozy$^{2,5}$}
\address[5]{Computer Science Department, Worcester Polytechnic Institute, Worcester, MA, USA} 

\renewcommand{\theenumi}{\alph{enumi}}

\begin{abstract}
    A proper coloring of the edges of a graph is one in which every $K_{1,2}$ is totally multicolored. 
    A B-coloring is a coloring of the edges of $G$ in which every $K_{1,2}$ and every $K_{2,2}$ is totally multicolored.

    In this paper, we improve two results by the authors and Gy\'arf\'as. The first shows that an outerplanar graph with maximum degree $\Delta$ can be B-colored with $\max\{\Delta,9\}$ colors, improving a bound of $\Delta+1$ and this is best possible up to the constant 9.

    The second shows that a planar graph with maximum degree $\Delta$ can be B-colored with $\max\{2\Delta+4,22\}$, improving a bound of $2\Delta+8$. 
\end{abstract}

\maketitle

\section{Introduction}

A {\em planar graph} is a graph that can be drawn on the Euclidean plane such that any two edges can only intersect at their endvertices.
An {\em outerplanar graph} is a planar graph that can be drawn such that all of its vertices lie on the boundary of the same face.
In this paper, all of our graphs will be simple.
That is, they will have no loops or multiple edges. 

An edge coloring of a graph $G$ is {\em proper} if adjacent edges must receive different colors. 
In this case, the edges incident with the same vertex are said to be {\em totally multicolored} or {\em rainbow}. 
Equivalently, the edges of any copy of the complete bipartite graph $K_{1,2}$ must be different. 
Vizing's Theorem~\cite{V} says that any graph $G$ with maximum degree $\Delta$ can be properly colored with either $\Delta$ or $\Delta+1$ colors. 
Holyer~\cite{H} proved that it is NP-hard to determine whether $\Delta$ colors suffice for an arbitrary graph $G$. 

The $(7,4)$-problem of Brown, Erd\H{o}s and S\'os~\cite{BETS} claims that any triple system (i.e., $3$-uniform hypergraph) on $n$ vertices such that no set of $7$ vertices has $4$ triples has a total of $o(n^2)$ triples. 

Closely related to the (7,4)-problem, Gy\'arf\'as and S\'ark\"ozy~\cite{GYS1} introduced and studied {\em A-} and {\em B-colorings}: an A-coloring is a proper coloring where the union of any two color classes does not contain paths or cycles with four edges and a B-coloring is a proper coloring where  every 4-cycle of $G$ must be rainbow. 
A-colorings are also called {\em star-edge colorings} in the literature and were defined in \cite{DMS} (they appeared also in \cite{EGY}) and they have been extensively studied.
See, for example,~\cite{BLMSS,DYZC,WWW}, or the survey~\cite{LS}.
We define $q_A(G)$ [$q_B(G)$] to be the minimum number of colors sufficient for an A-coloring [B-coloring] of the graph $G$.

In~\cite{GYMRS}, $q_B(G)$ was studied for planar graphs. 
After proving some partial results, Gy\'arf\'as and the present authors formulated the following conjecture.
\begin{conjecture}[\cite{GYMRS}] \label{conj:GYMRS}
    There is a $\Delta_0$ such that for $\Delta \geq \Delta_0$ we have $q_B(G) \leq 2 \Delta$ for any planar $G$ with maximum degree $\Delta(G)=\Delta$.  
    Similarly, for large enough $\Delta$, 
    $q_B(G) \leq  \Delta$ for any outerplanar $G$ with maximum degree $\Delta$.
\end{conjecture}

For planar graphs, the fact that $2\Delta$ colors may be necessary is shown by $K_{2,\Delta}$ since all edges must get a different color in any B-coloring.
For outerplanar graphs, it is immediate from Vizing's theorem.
However, the constant $\Delta_0$ is needed.
Examples can be seen in Figure 1.

We note that for A-colorings the corresponding problem is still wide open (see \cite{LS}).

The purpose of this paper is to prove the conjecture for outerplanar graphs and to give an improved bound for planar graphs.
Instead of expressing our results in the manner of Conjecture~\ref{conj:GYMRS}, we instead express it so that, for some function $f$ and constant $c$,  if the number of colors in the palette is at least $\max\bigl\{f(\Delta),c\bigr\}$ then a B-coloring of the graph with that palette is possible.

First for outerplanar graphs we prove the following, which improves on the bound of $\Delta+1$ from~\cite{GYMRS}:
\begin{theorem} \label{thm:outerplanar}
	Let $G$ be an outerplanar graph with maximum degree at most $\Delta$.
	Then $q_B(G) \leq \max\{\Delta,9\}$. 
\end{theorem}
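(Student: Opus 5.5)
We argue by a minimal counterexample. Fix $k=\max\{\Delta,9\}$ and suppose $G$ is an outerplanar graph with maximum degree at most $\Delta$ that has no B-coloring from a palette of $k$ colors, with $|V(G)|+|E(G)|$ minimum. Then $G$ is connected and has minimum degree at least $2$. Indeed, a vertex $v$ of degree $1$ with neighbor $u$ can be handled by coloring $G-v$ and then giving $uv$ a color that is absent at $u$. Since $uv$ lies in no $4$-cycle, only the at most $\Delta-1\le k-1$ other edges at $u$ forbid colors.

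Throughout, when we color a removed edge $xy$ we count two kinds of forbidden colors. The first kind is the colors on edges at $x$ and at $y$. The second kind is, for each $4$-cycle $xyzw$, the colors on $yz$, $zw$ and $wx$. In an outerplanar graph, edges at low-degree vertices lie in very few $4$-cycles. This is because $K_{2,3}$ is forbidden, so two vertices have at most two common neighbors.

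The main step is a structural lemma on outerplanar graphs with $\delta\ge 2$, in the spirit of the known unavoidable configurations. Every such graph contains one of the following.
\begin{enumerate}
\item Two adjacent vertices of degree $2$.
\item A degree-$2$ vertex $v$ whose neighbors $u,w$ are adjacent, with $\deg u\le 4$.
\item A degree-$2$ vertex $v$ on a triangle $uvw$ in which $u$ has degree at most $4$ and $u$'s other neighbors consist of another degree-$2$ vertex together with $w$ and one more vertex.
\end{enumerate}
A configuration like the last one is typical of the inner dual tree leaf analysis. I would prove the lemma by taking a $2$-connected block that is a leaf of the block tree, triangulating it, and looking at a leaf or deepest path of the weak dual tree. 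The faces near the end of the dual tree force a short chain of low-degree vertices.

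For each configuration, we delete one or two edges (or a vertex), color the rest by minimality, and extend. The key point is to bound the forbidden colors on the deleted edge $e=xy$ when one end, say $x$, has degree at most $4$. Edges at $y$ contribute at most $\Delta-1$ forbidden colors, and edges at $x$ contribute at most $3$. A $4$-cycle through $e$ uses a neighbor $z$ of $y$ and a neighbor $w$ of $x$, and by outerplanarity only $O(1)$ such cycles exist. The danger is that these colors are new, beyond the $\Delta-1$ already seen at $y$.

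Here is the delicate point. Colors on $yz$ are already counted at $y$, so only the colors on $zw$ and $wx$ are new, and there are few of them. This would give roughly $\Delta-1+c$ forbidden colors for small $c$. That is too many when $\Delta\ge 9$, because we need at most $k-1=\Delta-1$ forbidden colors. So a plain greedy extension fails, and this is the main obstacle. I would overcome it with a recoloring step: before coloring $e$, recolor the few edges at the low-degree end $x$. For example, choose the color of $wx$ to coincide with a color already present at $y$, or swap colors along a short Kempe-type alternating path. This makes the extra forbidden colors duplicate ones already counted. When $\Delta$ is small, the slack $k-\Delta\ge 0$ up to $9$ gives room instead, and this is where the constant $9$ enters. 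Verifying that such a recoloring is always available in each configuration, without creating new rainbow violations in $4$-cycles through the recolored edges, is the step I expect to be hardest.
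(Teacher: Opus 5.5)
\textbf{Gap.} The step you flag as hardest is missing, and it is the whole difficulty. Greedy extension genuinely fails in your configurations when the high-degree end has degree exactly $\Delta=k$.

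In configuration (2) with $\deg w=\Delta$, delete $v$. The edge $vw$ then sees the $\Delta-1$ colors at $w$ plus the color of $uz$, where $z$ is the second common neighbor of $u$ and $w$ (this comes from the $4$-cycle $vwzu$). Nothing forces $c(uz)$ to already appear at $w$, so all $\Delta$ colors may be blocked.

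Configuration (1) is not free either. Take adjacent degree-$2$ vertices $a,b$ with $aa'b'b$ a $4$-cycle and $\deg a'=\Delta$. Deleting $a$ leaves $aa'$ facing the $\Delta-1$ colors at $a'$ plus $c(bb')$. Deleting the edge $ab$ instead destroys the constraint $c(aa')\neq c(bb')$, and $aa'$ then has no spare color to move to.

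Your proposed fix (recolor $wx$ or $uz$, or swap along a Kempe chain) is never specified, and it is not routine for B-colorings. A swap that preserves the B-condition must be performed on a whole component of the two-colored \emph{conflict} graph, which includes parallel pairs. That component is not a path or a cycle, and the recolored edges lie in further $4$-cycles, possibly through other degree-$\Delta$ vertices. Nothing in the proposal controls where such a change propagates. As written, the proof reduces the theorem to an unproved claim. Your account of the constant $9$ also cannot be right: for $\Delta\ge 9$ there is no slack at all, so $9$ would have to come out of the missing recoloring argument.

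\textbf{What the paper does instead.} It never makes these configurations locally reducible. It relies on two ideas absent from your plan.

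First, edge contraction. If $uv$ lies in no triangle and $\deg u=2$, then $G/uv$ is a simple outerplanar graph with maximum degree at most $\Delta$. Every pair of edges of $G-uv$ that conflict in $G$ still conflict in $G/uv$, and $uv$ has at most $\deg v+1$ conflicts. This disposes of your configuration (1). It also forces every edge from a degree-$2$ vertex to a vertex of degree below $k-1$ to lie in a triangle.

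Second, a global rather than local contradiction. A $2$-connected outerplanar graph that is not a cycle contains a gadget: either a degree-$4$ vertex with two degree-$2$ neighbors, or a degree-$3$ vertex with a degree-$2$ neighbor. The greedy and contraction bounds are not used to color the gadget. They only show that its attaching vertices $w,w'$ are adjacent and have degree at least $\Delta-2$, and that $\Delta\ge 9$ in a counterexample.

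The paper then deletes all gadgets simultaneously. On the Hamilton cycle, the chord $ww'$ takes the place of each deleted stretch, so the result is again a $2$-connected outerplanar graph. Non-gadget vertices keep their degrees and neighborhoods, and every attaching vertex still has degree at least $\Delta-4\ge 5$. Hence any gadget of the new graph would already have been an undeleted gadget of $G$, a contradiction. This is exactly where $9$ enters: one needs $\Delta-4>4$.

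Unless you can actually supply a recoloring scheme that provably works, you need a global step of this kind.
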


In Figure~\ref{fig:example:outerplanar}, there is an outerplanar graph with $\Delta=4$ that requires $5$ colors, so it is an open question as to whether the expression $\max\{\Delta,9\}$ can be replaced by $\max\{\Delta,5\}$.

For planar graphs our result is the following, which improves on the bound of $2\Delta+8$ from~\cite{GYMRS}:
\begin{theorem} \label{thm:planar}
	Let $G$ be a planar graph with maximum degree at most $\Delta$.
	Then $q_B(G) \leq \max\{2\Delta+4,22\}$.
\end{theorem}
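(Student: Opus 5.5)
We argue by minimal counterexample. Fix a palette of $k=\max\{2\Delta+4,22\}$ colors and let $G$ be a planar graph with $\Delta(G)\le\Delta$ that has no B-coloring from this palette and has the fewest edges among all such graphs. We will show that $G$ contains a reducible configuration. Such a configuration is an edge $uv$, or a small set of edges, whose deletion leaves a graph $G'$ that is B-colorable by minimality, and whose B-coloring extends back to $G$. This contradicts the choice of $G$.

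The basic counting for extending the coloring to a single edge $uv$ is as follows. The new color must avoid every edge incident with $u$ or $v$, which is at most $d(u)+d(v)-2$ colors. It must also avoid every edge $xy$ that lies in a $4$-cycle $uvxy$, meaning $x\in N(v)$ and $y\in N(u)\cap N(x)$. Planarity controls the second family. Within the $4$-cycles through $uv$, the number of "opposite" edges $xy$ is bounded by something like $d(u)+d(v)$ plus a small constant, because the graph formed by the edges between $N(u)$ and $N(v)$ is planar and $K_{3,3}$-free. Pairs of common neighbors with many shared neighbors would create $K_{3,3}$ minors. So if, say, $d(u)\le 2$, or more generally $d(u)$ is small and the local structure is sparse, then the forbidden set has size at most $\Delta+O(d(u))+\Delta$. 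The palette $2\Delta+4$ then leaves a free color. The constant $22$ handles the regime where $\Delta$ is small and these additive constants dominate.

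The structural part uses discharging. We give each vertex charge $d(v)-4$ and each face charge $\ell(f)-4$, so the total charge is $-8$ by Euler's formula. We then show that a graph avoiding all reducible configurations can redistribute charge so that every element ends nonnegative, which is a contradiction. The candidate reducible configurations are:
\begin{enumerate}
\item a vertex of degree $1$;
\item a vertex of degree $2$;
\item a vertex of degree $3$ or $4$ with a neighbor of low degree;
\item light edges $uv$ with $d(u)+d(v)$ small, in the spirit of Kotzig's theorem that every planar graph of minimum degree $3$ has an edge with $d(u)+d(v)\le 13$.
\end{enumerate}
For a light edge we recount the $4$-cycle constraints precisely. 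Each edge $xy$ opposite to $uv$ in a $4$-cycle has $x\in N(v)$, so there are at most $\sum_{x\in N(v)\setminus u}|N(x)\cap N(u)|$ such edges. We bound this by $\Delta$ plus a small constant using planarity of the bipartite structure between $N(u)$ and $N(v)$.

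The main obstacle is the sharp counting for the $4$-cycle constraints, which has to produce exactly the slack $+4$. The naive bound counts each $x\in N(v)$ together with all of $N(u)\cap N(x)$, and this can be quadratic. I would first try to prove a lemma: in a planar graph, for an edge $uv$, the number of $4$-cycles through $uv$ is at most $d(u)+d(v)+c$ for a small $c$. The argument would be that these cycles correspond to edges of a planar bipartite "link" graph between $N(u)\setminus v$ and $N(v)\setminus u$, and this link graph is outerplanar-like because it lies around $u$ and $v$. Hence it has at most about $|N(u)|+|N(v)|$ edges. Combined with the $d(u)+d(v)-2$ adjacency constraints, choosing $uv$ with $d(u)$ bounded gives a total forbidden count of at most $2\Delta+3$, so $2\Delta+4$ colors suffice. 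Verifying the exact constant, and checking that the configurations needed for discharging all fit under $22$ when $\Delta$ is small, is the delicate step.
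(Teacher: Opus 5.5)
\textbf{The reduction step fails.} Your minimal-counterexample step deletes a single edge $uv$ and extends a B-coloring of $G-uv$. For B-colorings this does not work. If $uvxy$ is a $4$-cycle, the disjoint edges $vx$ and $yu$ are in general no longer together in any $4$-cycle of $G-uv$. So the coloring of $G-uv$ may give them the same color, and then no color on $uv$ makes $uvxy$ rainbow. Minimality therefore only lets you delete edges that lie on no $C_4$. In particular, light edges in Kotzig's sense cannot be reduced this way. The paper instead inducts on $|V(G)|$ and deletes a whole vertex $u$. Every constraint lost then involves an edge at $u$, and it is rechecked when those edges are colored.

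\textbf{The per-edge counting target is false, and the ordering idea is missing.} After deleting a vertex you must color all $d=\deg(u)$ edges at $u$. Your target of at most $2\Delta+3$ conflicts per edge with $d(u)$ bounded fails for $d\in\{4,5\}$. Your own arithmetic already shows this: $(d(u)+d(v)-2)+(d(u)+d(v)+c)=2\Delta+2d(u)+c-2$ when $d(v)=\Delta$. Lemma~\ref{lem:main:planar} bounds the conflicts by $2\deg(u)+2\deg(v)+\lfloor\frac12\min\{\deg(u),\deg(v)\}\rfloor-6$, and this bound is attained:
\begin{itemize}
\item take a triangle $uva$;
\item inside it put $b_2,y'$ with edges $ub_2,b_2a,b_2y',y'a,y'v$;
\item outside it put $b_1$ adjacent to $u,a$, vertices $y_1,\dots,y_{\Delta-3}$ adjacent to $b_1$ and $v$, and the edge $ay_1$.
\end{itemize}
Then $\deg(u)=4$ and $\deg(v)=\Delta$, and $uv$ has $\Delta+2$ incident and $\Delta+2$ parallel conflict edges. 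That is $2\Delta+4$ in total, the whole palette once $\Delta\ge 9$. With a second common neighbor one similarly reaches $2\Delta+6$ for $\deg(u)=5$.

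What rescues the paper is the \emph{order} in which the star at $u$ is colored. The edges to two neighbors $v_1,v_2$ of unrestricted degree are colored first. At that moment the other $d-1$ (resp.\ $d-2$) edges at $u$ are still uncolored and cost nothing. For $d=5$ this brings $2\Delta+6$ down to $2\Delta+2$ and $2\Delta+3$. The remaining $d-2$ edges go to neighbors of bounded degree, so their conflict counts are absolute constants absorbed by $22$.

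The unavoidable configuration this needs is $\delta\le 2$, or a vertex of degree $d\in\{3,4,5\}$ with $d-2$ neighbors of bounded degree. That is exactly the Borodin--Ivanova theorem (Theorem~\ref{thm:BI}) the paper imports. Your list (a $3$- or $4$-vertex with one low-degree neighbor, plus light edges) does not supply it. It omits $5$-vertices, which are unavoidable when $\delta=5$. For a $5$-vertex, three of the five edges must go to neighbors of small degree for the ordered count to work. Finally, the discharging argument that would prove unavoidability is only sketched, and so is the sharp conflict lemma you identify as the main obstacle; both remain unproved.
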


This is still short of the conjectured value of $\max\{2\Delta,c\}$ for some constant $c$.
In Figure~\ref{fig:example:planar}, there is a planar graph with $\Delta=4$ that requires $12$ colors, so it is open as to whether the value of $22$ can be reduced to $12$. 

\begin{figure}
	\begin{subfigure}[b]{0.48\textwidth}
		\centering
        \begin{tikzpicture}[scale=1]
            \centering
			\useasboundingbox (-1.2,-1.2) rectangle (1.2,1.2);
			\coordinate (a) at (90:1);
			\coordinate (b) at (18:1);		
			\coordinate (c) at (-54:1);
			\coordinate (d) at (-126:1);
			\coordinate (e) at (162:1);
			\begin{pgfonlayer}{forefore}
				\draw (a) node[vtx] {};
				\draw (b) node[vtx] {};
				\draw (c) node[vtx] {};
				\draw (d) node[vtx] {};
				\draw (e) node[vtx] {};
			\end{pgfonlayer}
			\begin{pgfonlayer}{fore}
				\draw[edg] (a) -- (b);
				\draw[edg] (b) -- (c);
				\draw[edg] (c) -- (d);
				\draw[edg] (d) -- (e);
				\draw[edg] (e) -- (a);
				\draw[edg] (a) -- (c);
				\draw[edg] (a) -- (d);
			\end{pgfonlayer}
		\end{tikzpicture}
		\caption{Outerplanar graph with $\Delta=4$ but $\chi_B=5$.}
        \label{fig:example:outerplanar}
	\end{subfigure} \hfill
	\begin{subfigure}[b]{0.48\textwidth}
        \centering
		\begin{tikzpicture}[scale=1]
			\useasboundingbox (-1.2,-1.2) rectangle (1.2,1.2);
			\coordinate (a) at (0,0.8);
			\coordinate (b) at (-1,0);		
			\coordinate (c) at (-0.33,0);
			\coordinate (d) at (0.33,0);
			\coordinate (e) at (1,0);
	                \coordinate (f) at (0,-0.8);
			\begin{pgfonlayer}{forefore}
				\draw (a) node[vtx] {};
				\draw (b) node[vtx] {};
				\draw (c) node[vtx] {};
				\draw (d) node[vtx] {};
				\draw (e) node[vtx] {};
				\draw (f) node[vtx] {};
			\end{pgfonlayer}
			\begin{pgfonlayer}{fore}
				\draw[edg] (a) -- (b) (b) -- (f);
				\draw[edg] (a) -- (c) (c) -- (f);
				\draw[edg] (a) -- (d) (d) -- (f);
				\draw[edg] (a) -- (e) (e) -- (f);
				\draw[edg] (b) -- (c) (c) -- (d) (d) -- (e);
                    		\draw[edg] (b) to[out=90,in=90,looseness=2] (e);
			\end{pgfonlayer}
		\end{tikzpicture}
		\caption{Planar graph with $\Delta=4$ but $\chi_B=12$.}
        \label{fig:example:planar}
	\end{subfigure}
    \label{fig:example}
	\caption{Small examples}
\end{figure}

A graph $G$ is said to be a {\em topological copy} of a graph $H$ if $G$ can be formed by subdivisions of the edges of $H$.
If $G$ is a graph with a partial B-coloring of the edges and the edge $uv$ is uncolored, we say that an edge that is incident to $u$, incident to $v$ or forms a $C_4$ with $uv$ and receives some color is a {\em conflict edge}. 
A color of a conflict edge of $uv$ is a {\em conflict color}.
We say that two edges are {\em parallel} if they are in the same $C_4$ but do not share a vertex. 

\section{Proof of Theorem~\ref{thm:outerplanar}, the outerplanar case.}

We take note of the following well-known observations of outerplanar graphs:
\begin{lemma}
    The following properties are true of outerplanar graphs:
    \begin{itemize}
        \item The property of outerplanarity is hereditary. That is, it is closed under vertex deletion.
        \item The property of outerplanarity is minor-closed. That is, it is closed under edge-contraction.
    \end{itemize}
	Let $G$ be an outerplanar graph on $n$ vertices.
	\begin{enumerate}
		\item 	$G$ has no topological copy of $K_4$. \label{outerplanar:noK4}
		\item 	$G$ has no topological copy of $K_{2,3}$. \label{outerplanar:noK23}
		\item 	If $n\geq 2$, then $e(G)\leq 2n-3$. \label{outerplanar:2n}
            	\item   	$G$ has at least two vertices of degree at most two. \label{outerplaner:2deg2}
		\item 	If $G$ is $2$-connected, then $G$ is hamiltonian.  \label{outerplanar:hamiltonian}
            \item   If $G$ is $2$-connected, then either $G$ is a cycle or there is a vertex of degree $4$ that has two neighbors with degree $2$ or a vertex of degree $3$ that has at least one neighbor with degree $2$. \label{outerplanar:deg4deg2}
	\end{enumerate}
    \label{lem:structure:outerplanar}
\end{lemma}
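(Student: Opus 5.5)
Most items are standard consequences of the characterization of outerplanar graphs: $G$ is outerplanar iff adding a new vertex $z$ adjacent to all of $V(G)$ yields a planar graph $G^+$. I would prove that characterization first. Given an outerplanar embedding, place $z$ in the outer face and join it to every vertex along the boundary. Conversely, in a planar embedding of $G^+$, deleting $z$ merges its incident faces into one face containing every vertex.

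The two closure properties then follow quickly. Deleting vertices clearly preserves outerplanarity. For contraction, contracting an edge of $G$ in $G^+$ gives a planar graph in which $z$ is still adjacent to all remaining vertices, so the contracted graph is again outerplanar.

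For (\ref{outerplanar:noK4}) and (\ref{outerplanar:noK23}): a topological copy of $K_4$ or $K_{2,3}$ in $G$ becomes, after adding $z$ (adjacent to the branch vertices), a topological copy of $K_5$ or $K_{3,3}$ in $G^+$. This contradicts Kuratowski's theorem.

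For (\ref{outerplanar:2n}), Euler's formula gives $e(G^+)\le 3(n+1)-6$ once $n+1\ge3$. Hence $e(G)=e(G^+)-n\le 2n-3$.

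For (\ref{outerplaner:2deg2}), I would argue by induction on $n$ rather than by averaging, since $e(G)\le 2n-3$ alone guarantees only one vertex of degree at most $3$.
\begin{itemize}
\item The cases $n\le 3$ are trivial, and if $G$ is disconnected or has a cut vertex, I apply induction to the blocks.
\item Take an end block, or two blocks; each contains two vertices of degree at most $2$ in the block. At most one of these is the cut vertex, so each end block contributes a non-cut vertex of degree at most $2$ in $G$.
\item If $G$ is $2$-connected, use (\ref{outerplanar:hamiltonian}). The outer face is a Hamiltonian cycle $C$ and every other edge is a chord drawn inside. The chords form a weak dual tree of the inner faces.
\item If there are no chords, every vertex has degree $2$.
\item Otherwise the dual tree has at least two leaves. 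Each leaf face is bounded by one chord and a path of $C$ with an interior vertex, and those interior vertices have degree $2$. The two leaf faces give two distinct such vertices.
\end{itemize}

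For (\ref{outerplanar:hamiltonian}), in a $2$-connected plane graph every face boundary is a cycle. The outer face contains all vertices, so its boundary cycle is Hamiltonian.

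Item (\ref{outerplanar:deg4deg2}) is the main obstacle, and I would again work with the weak dual tree $T$ of the chorded Hamiltonian cycle, assuming $G$ is not a cycle.
\begin{itemize}
\item If $T$ is a single vertex, $G$ is not a cycle only if there is a chord. So $T$ has at least two vertices.
\item Root $T$ and take a deepest leaf face $F$, attached by a chord $xy$ to its parent face $P$. $F$ is $xy$ together with a path $x=p_0,\dots,p_k=y$ on $C$, $k\ge2$, whose interior vertices have degree $2$.
\item If $P$ has no child faces other than leaves (this holds for a deepest leaf's parent), then $P$'s boundary consists of the chord to its own parent (if any), a few chords to leaf children, and arcs of $C$.
\item Look at a vertex $x$ of the chord $xy$. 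Its neighbors are $p_1$ (degree $2$), its neighbor along $P$, and possibly its other $C$-neighbor. At most one further chord can be incident to $x$, namely one to another leaf child sharing $x$, and that leaf would also supply a degree-$2$ neighbor.
\item So $x$ has degree $3$ with a degree-$2$ neighbor $p_1$, or degree $4$ with two degree-$2$ neighbors from two leaf faces. Degree at least $5$ would require a non-leaf child at $x$, contradicting depth.
\end{itemize}

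The care needed is the case analysis showing $\deg x\le 4$ and, in the degree-$4$ case, that both extra neighbors have degree $2$. I would handle this by listing the faces around $x$ in the rotation: the leaf faces at $x$ contribute degree-$2$ neighbors, and at most one non-leaf face (namely $P$) meets $x$.
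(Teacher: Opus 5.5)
Your arguments for the closure properties and items (a)--(e) are fine; the paper itself proves only (f) and cites West for the rest. The gap is in (f), at the step ``look at a vertex $x$ of the chord $xy$ \dots{} at most one non-leaf face (namely $P$) meets $x$.'' This is false when $x$ is also an endpoint of the chord joining $P$ to its own parent $Q$. In that case $Q$ also meets $x$, the faces around $x$ can continue through $Q$ and beyond, and $\deg(x)$ is unbounded.

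Concretely, take the $7$-cycle $1,2,\dots,7$ with chords $13,14,15,16$. The inner faces $123,134,145,156,167$ form a path $T$. Rooting $T$ at $167$, the deepest leaf is $F=123$ with chord $13$ and parent $P=134$, whose parent chord is $14$. Choosing $x=1$ gives $\deg(1)=6$. So your conclusion ``degree at least $5$ would require a non-leaf child at $x$'' fails: the extra degree comes from the parent side, not from a child. The other endpoint $y=3$ does work, with $\deg(3)=3$ and neighbor $2$ of degree $2$.

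The repair is small but necessary. The chord $xy$ and the parent chord of $P$ are distinct edges, so they share at most one endpoint. Choose $x$ to be an endpoint of $xy$ not on $P$'s parent chord (either endpoint if $P$ is the root).
\begin{itemize}
\item The inner faces around $x$ form a path in $T$ beginning $F,P$.
\item The next face, if any, lies across a chord at $x$ that is not $P$'s parent chord. Hence it is a child of $P$, and therefore a leaf by the depth choice, so the path stops there.
\item This gives $\deg(x)=3$ with the degree-$2$ neighbor $p_1$, or $\deg(x)=4$ with both $p_1$ and the other cycle-neighbor of $x$ of degree $2$. That other neighbor is an interior vertex of the second leaf face's arc.
\end{itemize}

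For comparison, the paper avoids the dual tree altogether. Every edge at a degree-$2$ vertex lies on every Hamilton cycle, so no vertex has more than two degree-$2$ neighbors. Applying (d) to the subgraph induced by the vertices of degree at least $3$ then yields a vertex of degree at most $2$ there. In $G$, that vertex has degree $3$ with a degree-$2$ neighbor, or degree $4$ with two degree-$2$ neighbors.
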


Note that if we contract an edge of a planar graph and the edge is in a triangle, then the graph is no longer be simple. 
Thus we will take care to ensure that any edge we contract will not be in a triangle. 

\begin{proof}
    We will only prove~\eqref{outerplanar:deg4deg2}, the others can be found e.g. in West's textbook~\cite{DW}.
    Let $G$ be a 2-connected outerplanar graph.
    Let $V_2\subseteq V=V(G)$ be the set of degree-2 vertices.
    Observe that for any Hamilton cycle in $G$, both edges incident to a degree-$2$ vertex are edges in that Hamilton cycle.
    If $G$ is not a cycle, then by the fact that outerplanarity is a hereditary property, the graph $G'=G[V-V_2]$ is an outerplanar graph with at least one edge.
    (Note that $G'$ might not be connected, let alone $2$-connected.)

    Since $G$ is hamiltonian and every edge incident to a degree-$2$ vertex belongs to every Hamilton cycle, no vertex is incident to more than two vertices in $V_2$.
    Since $G'$ is outerplanar and not empty, there is a vertex of degree at most 2, but that means that, in $G$, it either has degree 4 and has two neighbors in $V_2$ or it has degree 3 and has one or two neighbors in $V_2$.
\end{proof}

Lemma~\ref{lem:main:outerplanar} below is the main technical lemma for  Theorem~\ref{thm:outerplanar} which gives a bound on the number of conflict edges. 
The proof is in Section~\ref{sec:main:outerplanar}.
\begin{lemma}\label{lem:main:outerplanar}
	In an outerplanar graph $G$, the number of edges that conflict with the edge $xy$ is at most $d(x)+d(y)+\min\{d(x),d(y),5\}-3$.

    If there exists a degree-$2$ vertex $z$ such that $xyz$ is a triangle, then the number of edges that conflict with the edge $xy$ is at most $d(x)+d(y)+\min\{d(x),d(y),4\}-4$.

    If the edge $xy$ is not in a $C_4$, then the number of edges that conflict with the edge $xy$ is at most $d(x)+d(y)-2$.
\end{lemma}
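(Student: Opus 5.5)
We count the conflict edges of $xy$ in two groups: those sharing an endpoint with $xy$, and those parallel to $xy$ in some $C_4$. The first group has exactly $(d(x)-1)+(d(y)-1)=d(x)+d(y)-2$ edges. This already proves the third statement, since if $xy$ lies in no $C_4$ there are no parallel edges. For the other two statements it therefore suffices to bound the number $P$ of parallel edges: we need $P\le \min\{d(x),d(y),5\}-1$ in general, and $P\le \min\{d(x),d(y),4\}-2$ in the triangle case.

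A parallel edge is an edge $ab$ with $a\in N(x)\setminus\{y\}$, $b\in N(y)\setminus\{x\}$, $a\neq b$, so that $xaby$ is a $4$-cycle. Let $B$ be the bipartite-type graph formed by these edges between $A=N(x)\setminus\{y\}$ and $B'=N(y)\setminus\{x\}$; a common neighbour may lie in both sets.

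\textbf{First key claim: $B$ is a matching.} Suppose two parallel edges $ab$ and $ab'$ share the vertex $a\in A$. Then $x$, $y$, $a$ are branch vertices, and the paths $x$--$a$, $a$--$b$--$y$, $a$--$b'$--$y$ and $x$--$y$ look like a $K_{2,3}$ or $K_4$ subdivision. I would make this precise with Lemma~\ref{lem:structure:outerplanar}(\ref{outerplanar:noK4}),(\ref{outerplanar:noK23}). The vertices $x$ and $a$ are joined by three internally disjoint paths: the edge $xa$, the path $x\,y\,b\,a$, and the path $x\,y\,b'\,a$. These last two share $y$, so they are not disjoint and this triple fails. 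Instead, take $x,a,y$ with the edges $xa$, $xy$ and the paths $aby$, $ab'y$, which gives a theta-like structure plus a triangle. The cleanest route is to use the Hamilton cycle or a fixed outerplane embedding with all vertices on the outer face. In such an embedding, the chords from $x$ and $y$ and the edges between their neighbourhoods must be non-crossing, and a short case analysis of the cyclic order shows that an outerplanar graph cannot have both $ab$ and $ab'$ with $b\ne b'$ unless a $K_4$-minor appears. Equivalently, contract $xy$; the resulting vertex has neighbours $a,b,b'$ with $a$ adjacent to both $b$ and $b'$. We get a $K_4$ minor if $b$ and $b'$ are also connected, and otherwise we need the embedding order. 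I expect this matching/non-crossing argument to be the main obstacle, and I would do it via the cyclic order on the outer face. The neighbours of $x$ and $y$ appear along the boundary, and non-crossing forces the parallel edges to be nested between consecutive neighbours. This bounds each vertex of $A\cup B'$ to at most one parallel edge, except possibly the common neighbour $z$ of $x$ and $y$.

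\textbf{Second step: bounding $P$ by counting.} Given the matching property, $P\le\min\{d(x),d(y)\}-1$, which gives the first bound when $\min\{d(x),d(y)\}\le 5$. To get $P\le 4$ in general, use the outerplane order again. The parallel edges are chords $a_ib_i$, and nested chords $a_1b_1,\dots,a_kb_k$ together with $x$ and $y$ yield, for $k\ge 3$ or so, a subdivision of $K_{2,3}$ with branch vertices $\{x,y\}$ or $\{a_1,b_1\}$. So the true bound comes from non-crossing: all parallel edges lie in the region between $x$ and $y$ on one side of $xy$. Each side can contribute only boundedly many without creating $K_4$ or $K_{2,3}$, so $P\le 4$ follows from a constant-size case check. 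In the triangle case with $z$ of degree $2$, the vertex $z$ lies in both $A$ and $B'$ yet is incident to no parallel edge. It therefore removes one from each count, and it occupies one side of $xy$, which lowers the bound by one. This gives $P\le\min\{d(x),d(y),4\}-2$.
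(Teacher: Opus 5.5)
You set up the right reduction, the same as the paper's: bound the number $P$ of edges parallel to $xy$. The third statement is fine, and $P\le\min\{d(x),d(y)\}-1$ (resp.\ $-2$ when the degree-$2$ vertex $z$ exists) does follow from your matching claim. But you never prove that claim. You try a contraction and an embedding argument and then defer to an unspecified non-crossing analysis. The claim is immediate: if $ab$ and $ab'$ are both parallel to $xy$ with $a\in N(x)$ and distinct $b,b'\in N(y)$, then $\{a,y\}$ and $\{x,b,b'\}$ span a $K_{2,3}$ subgraph. This holds for every $a$, so you need no exception for common neighbours. A common neighbour can lie on two parallel edges only by playing its $N(x)$-role in one and its $N(y)$-role in the other.

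The genuine gap is the constant cap $P\le 4$, and $P\le 2$ in the triangle case. This cap is the entire content of the $\min\{\cdot,5\}$ and $\min\{\cdot,4\}$ terms, and you only assert it as ``a constant-size case check.'' Your geometric picture is also wrong in two places. First, parallel edges can occur on both sides of $xy$: take a common neighbour $z_2$ with $z_1\in N(x)$, $z_3\in N(y)$ and edges $z_1z_2,z_2z_3$, and place one copy on each side of $xy$; this is outerplanar with $P=4$. Second, two vertex-disjoint parallel edges on one side cross rather than nest.

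Vertex-disjoint parallel edges are the easy case: three of them give three internally disjoint $x$--$y$ paths, which is a topological $K_{2,3}$. What you are missing is how to handle parallel edges that share a vertex. The key observation is that no parallel edge joins two common neighbours $a,b$ of $x$ and $y$, since $\{x,y,a,b\}$ would then span a $K_4$. Combine this with the matching claim. In the graph $H$ formed by the parallel edges, every vertex has degree at most $2$, degree $2$ occurs only at common neighbours, and no two common neighbours are adjacent in $H$. So each component of $H$ is a path with at most two edges.

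Each component of $H$ contains an $x$--$y$ path $x\,a\,b\,y$ whose interior lies inside it. Three components would therefore give a topological $K_{2,3}$, so $H$ has at most two components and $P\le 4$. In the triangle case, $x\,z\,y$ is one more such path avoiding $H$, because $N(z)=\{x,y\}$. So $H$ has at most one component and $P\le 2$.

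This is exactly the structure the paper establishes: the parallel edges form at most two vertex-disjoint copies of $P_2$ or $P_3$. It needs no outerplane embedding or ``sides'' of $xy$, which matters because the lemma is stated for arbitrary outerplanar graphs.
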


With these important lemmas, we proceed to the proof of the outerplanar result. 

\begin{proof}[Proof of Theorem~\ref{thm:outerplanar}]
    With foresight, we will set $c=9$ but use $c$ for convenience and to prevent confusion. 

	The proof will proceed by induction on $|E(G)|$ but we must be careful.
    If an edge $e$ is deleted, then a $C_4$ may be destroyed, which would remove a constraint between parallel edges of this $C_4$. 
    Most of the time, we will delete vertices to avoid this complication. 
	The case of $|E(G)|\leq\max\{\Delta,c\}$ is trivial because the edges can be totally multicolored.

    Now suppose that any outerplanar graph on fewer than $|E(G)|$ edges with maximum degree at most $\Delta$ can be B-colored with at most $\max\{\Delta,c\}$ colors.

    First, we may assume that $G$ is $2$-connected.
    Otherwise if $G$ is the result of two graphs $G_1$ and $G_2$ that share only a common vertex $v$, then we can B-color each independently with $\max\{\Delta,c\}$ colors and permute the colors on $G_2$ so that the edges at $v$ all receive different colors. 
    This results in a B-coloring of $G$ because $v$ cannot be in a $C_4$ that uses vertices from both $G_1-v$ and $G_2-v$. 

    Fact~\ref{fact:outer:conflict} below results from the fact that if there is a vertex $u$ such that no edges incident to it have at least $\max\{\Delta,c\}$ conflict edges, then we can delete $u$, color $G-u$ with $\max\{\Delta,c\}$ colors and then greedily color the edges incident to $u$ because there will always be a color remaining.

    \begin{fact}\label{fact:outer:conflict}
        We may assume that every vertex $u$ is adjacent to a vertex $v$ such that $uv$ conflicts with at least $\max\{\Delta,c\}$ other edges. 
    \end{fact}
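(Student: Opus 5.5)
We argue by contradiction, as a reduction inside the induction of the proof of Theorem~\ref{thm:outerplanar}. Write $k=\max\{\Delta,c\}$. Suppose some vertex $u$ has the property that every edge $uv$ incident to $u$ has fewer than $k$ conflict edges in $G$. We show that a B-coloring of $G$ with $k$ colors exists regardless, so we may assume no such $u$ exists, which is exactly Fact~\ref{fact:outer:conflict}.

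\textbf{Step 1 (delete $u$ and apply induction).} Let $G'=G-u$. Since outerplanarity is hereditary (Lemma~\ref{lem:structure:outerplanar}), $G'$ is outerplanar, and $\Delta(G')\le\Delta$. If $u$ is not isolated, then $|E(G')|<|E(G)|$, so the induction hypothesis gives a B-coloring of $G'$ with $k$ colors. The concern raised earlier about deleting edges does not arise here. Every $C_4$ of $G'$ is still a $C_4$ of $G$, and every $K_{1,2}$ of $G'$ is still one of $G$. Conversely, every $C_4$ or $K_{1,2}$ of $G$ that avoids $u$ already lies in $G'$. So the constraints among edges of $G'$ are exactly the same in $G$ and in $G'$, and the coloring of $G'$ remains valid as a partial B-coloring of $G$.

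\textbf{Step 2 (greedy extension).} Color the edges $uv_1,\dots,uv_d$ at $u$ one at a time. When we reach $uv_i$, a color is forbidden only if it is the color of some conflict edge of $uv_i$ that is already colored. Such an edge is either an edge of $G'$ that is adjacent to or parallel to $uv_i$, or one of the edges $uv_j$ with $j<i$, which share the vertex $u$ and are therefore also conflict edges. In every case it is a conflict edge of $uv_i$ in $G$. By assumption there are fewer than $k$ such edges, so at most $k-1$ colors are forbidden and some color remains.

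\textbf{Step 3 (verify the result is a B-coloring).} Take any $K_{1,2}$ or $C_4$ of $G$ that contains an edge $uv_i$. Each of its other edges is a conflict edge of $uv_i$. The edge among them that was colored last was chosen to avoid the colors of all the others, so the subgraph is rainbow. A $K_{1,2}$ or $C_4$ that avoids $u$ is rainbow by Step 1. Hence $G$ is B-colored with $k$ colors, which contradicts our assumption.

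The only subtle point, and the main obstacle, is Step 1: we must check that vertex deletion, unlike edge deletion, does not relax any constraint on the remaining edges. That holds because every $C_4$ containing an edge at $u$ is itself removed together with $u$. We also need $u$ to be non-isolated for the edge count to drop, and this follows from the reduction to the 2-connected case already made.
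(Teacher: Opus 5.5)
Your proposal is correct and is exactly the paper's argument: delete $u$, B-color $G-u$ by induction (vertex deletion keeps every constraint among the remaining edges), and then greedily color the edges at $u$, each of which has at most $\max\{\Delta,c\}-1$ conflicts. One small point: Step~3 is cleaner stated pairwise. Any two edges of a $K_{1,2}$ or $C_4$ conflict, so either the later-colored one avoided the other's color, or both lie in $G-u$ and are adjacent there (in a $C_4$ through $u$, the two edges missing $u$ share the vertex opposite $u$).
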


    Fact~\ref{fact:outer:uv} gives us conditions on which we cannot apply induction. 
    For part \eqref{it:outer:C4}, we would like to delete an edge $uv$ not in a $C_4$, B-color the edges of $G-uv$, and then color $uv$ if a color remains that does not conflict with the previously-colored edges. 
    This can be done unless the sum $d(u)+d(v)$ is too large. 
    For part \eqref{it:outer:C3}, we would like to contract the edge $uv$, B-color the edges of the graph that results from contracting the edge $uv$ (which we denote with $G/uv$), and then color $uv$ if a color remains which does not conflict with the colors on the previously-colored edges. 
    Part \eqref{it:outer:C3deg2} is a special case of \eqref{it:outer:C3}, which provides a better bound when one vertex has degree 2.

    \begin{fact}\label{fact:outer:uv}
        Let $u,v\in V(G)$ and $uv\in E(G)$. 
        We may assume that each of the following occur:
        \begin{enumerate}[(a)]
            \item If $uv$ is not in a $C_4$, then $d(u)+d(v)\geq \max\{\Delta,c\}+2$. \label{it:outer:C4}
            \item If $uv$ is not in a $C_3$, then either $d(u)+d(v)\geq\Delta+3$ or $d(u)+d(v)\geq \max\{\Delta,c\}-2$. \label{it:outer:C3}
            \item If $uv$ is not in a $C_3$ and $d(u)=2$, then  $d(v)\geq\max\{\Delta,c\}-1$. \label{it:outer:C3deg2}
            \item Every degree-$2$ vertex has a neighbor of degree at least $\max\{\Delta,c\}-1$. \label{it:outer:deg}
        \end{enumerate}
    \end{fact}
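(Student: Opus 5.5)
The plan is to treat $G$ as a counterexample with $|E(G)|$ minimal. Each item of Fact~\ref{fact:outer:uv} then follows by showing that its failure would let us B-color $G$ from a smaller outerplanar graph. Throughout, write $M=\max\{\Delta,c\}$ and $s=d(u)+d(v)$. The general principle is that an uncolored edge with fewer than $M$ conflict edges can always be colored. Lemma~\ref{lem:main:outerplanar}, applied in $G$, bounds the number of conflict edges.

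For \eqref{it:outer:C4}, suppose $uv$ lies in no $C_4$ and $s\le M+1$. Delete the edge $uv$. Since $uv$ is in no $C_4$, every $C_4$ of $G$ other than those through $uv$ survives in $G-uv$, so the B-coloring constraints among the remaining edges are unchanged. By induction, $G-uv$ (outerplanar, maximum degree at most $\Delta$) has a B-coloring with $M$ colors. By the third part of Lemma~\ref{lem:main:outerplanar}, $uv$ has at most $s-2\le M-1$ conflict edges, so a free color exists for $uv$.

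For \eqref{it:outer:C3}, suppose $uv$ lies in no triangle, $s\le\Delta+2$ and $s\le M-3$. Contract $uv$ to a vertex $w$. Because there is no triangle, $G/uv$ is simple, and it is outerplanar with fewer edges. It has $d(w)=s-2\le\Delta$, so induction gives a B-coloring with $M$ colors. Pull the coloring back: edges at $u$ and at $v$ other than $uv$ inherit their colors from the corresponding edges at $w$, so they are pairwise distinct.

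The main point to check is that every $C_4$ of $G$ avoiding the edge $uv$ maps to a $C_4$ of $G/uv$. Such a $C_4$ cannot contain both $u$ and $v$: if they were opposite on it, say $u a v b$, then $uva$ would be a triangle; and if they were adjacent on it, the cycle would use the edge $uv$. So the $C_4$ contains at most one of $u,v$ and maps bijectively onto a $C_4$ of $G/uv$, hence it is rainbow.

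Finally, $uv$ has at most $s+\min\{d(u),d(v),5\}-3\le s+2\le M-1$ conflict edges by the first part of Lemma~\ref{lem:main:outerplanar}, so it can be colored. Part \eqref{it:outer:C3deg2} is the same argument with $d(u)=2$. Then $d(w)=d(v)\le\Delta$ automatically, and the conflict bound becomes $d(v)+2+2-3=d(v)+1$. This is less than $M$ whenever $d(v)\le M-2$.

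For \eqref{it:outer:deg}, let $u$ have degree $2$ with neighbors $v,w$, and suppose both $d(v),d(w)\le M-2$. If $uv$ (or $uw$) is in no triangle, \eqref{it:outer:C3deg2} already yields a contradiction. Otherwise $uvw$ is a triangle, and we delete the vertex $u$ and color $G-u$ by induction. Deleting $u$ only removes $C_4$'s through $u$, and these are accounted for when we color $uv$ and $uw$ afterwards, counting conflicts in $G$. By the second part of Lemma~\ref{lem:main:outerplanar} (with $z=u$), $uv$ has at most $d(v)+2+2-4=d(v)\le M-2$ conflict edges, and similarly for $uw$. Color $uw$ first and then $uv$; even counting $uw$ among the conflicts of $uv$, fewer than $M$ colors are blocked.

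The step needing the most care is the contraction argument in \eqref{it:outer:C3}. There we must ensure that pulling the coloring back does not create a non-rainbow $C_4$, and that the degree of the contracted vertex stays within the palette hypothesis.
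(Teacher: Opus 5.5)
Your proof of (a)--(c) is the paper's argument: delete $uv$ when it lies in no $C_4$, contract it when it lies in no triangle, and count conflicts in $G$ via Lemma~\ref{lem:main:outerplanar}. For (d), you inline the delete-$u$-and-recolor argument that the paper packages as Fact~\ref{fact:outer:conflict}.

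There is one slip in (d). The second part of Lemma~\ref{lem:main:outerplanar} needs the degree-$2$ vertex $z$ to be a \emph{third} vertex forming a triangle with the edge, so it cannot be applied to $uv$ with $z=u$. The bound $d(v)$ is in fact false in general: in the diamond with edges $uv,uw,vw,vb,wb$, the edge $uv$ conflicts with $uw,vw,vb,wb$, i.e.\ $d(v)+1$ edges. The fix is immediate. The first part of the lemma gives $d(v)+1\le M-1$, which still leaves a free color for each edge at $u$, and it also makes your triangle/no-triangle case split unnecessary.
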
~\\

    \begin{proof}[Proof of Fact~\ref{fact:outer:uv}]~\\
        \begin{itemize}
            \item [(\ref{it:outer:C4})] By Lemma~\ref{lem:main:outerplanar}, the number of edges that conflict with $uv$ is at most $d(u)+d(v)-2$.
            By induction, we may B-color $G-\{uv\}$ by $\max\{\Delta,c\}$ colors and as long as $d(u)+d(v)-2<\max\{\Delta,c\}$, there is a color available for $uv$. 

            \item [(\ref{it:outer:C3})] By contracting $uv$, the fact that outerplanarity is a minor-closed property gives that the resulting graph $G'=G/uv$ is outerplanar and has maximum degree $\max\{\Delta,d(u)+d(v)-2\}$. 
            Since $uv$ was not in a triangle, $G'$ is a simple graph.
            Moreover, every pair of edges in $G'$ are that are in conflict in $G$ are also in conflict in $G'$:
            If they are adjacent edges in $G$, they remain adjacent in $G'$. 
            If they are parallel edges in $G$, then either they remain parallel in $G'$ or they are in a $C_4$ which contains $uv$ as an edge in which case, the edges become adjacent in $G'$. 
            (We note that there could be more constraints in $G'$ than in $G$, say there could be two edges in some induced $C_5$ that includes $uv$ that become parallel edges in $G'$.)
        
            If $d(u)+d(v)-2\leq\Delta$, then $G'$ has maximum degree at most $\Delta$, and every pair of edges in $E(G)-\{uv\}$ that conflict in $G$ will also conflict in $G'$, we may apply induction to $G'$.
            By Lemma~\ref{lem:main:outerplanar}, there are at most $d(u)+d(v)+5-3$ edges that conflict with $uv$ and if $d(u)+d(v)+2<\max\{\Delta,c\}$, then there is a color available for $uv$. 
            
            \item [(\ref{it:outer:C3deg2})] We follow the same reasoning as in \eqref{it:outer:C3} but observe first that if $d(u)=2$, then $d(u)+d(v)-2\leq\Delta$. 
            Again, we apply induction to $G'$.
            Then note that since $\min\bigl\{d(u),d(v),5\bigr\}=2$, Lemma~\ref{lem:main:outerplanar} gives that there are at most $d(u)+d(v)+2-3=d(v)+1$ edges that conflict with $uv$ and if $d(v)+1<\max\{\Delta,c\}$, then there is a color available for $uv$. 
            
            \item [(\ref{it:outer:deg})] By Lemma~\ref{lem:main:outerplanar}, if $d(u)=2$ then for any neighbor $v$, the number of edges that conflict with $uv$ is at most $d(u)+d(v)+2-3=d(v)+1$. 
            By Fact~\ref{fact:outer:conflict}, there is a neighbor $v$ such that $uv$ conflicts with at least $\max\{\Delta,c\}$ other edges and so there must be a vertex $v$ adjacent to $u$ such that $d(v)+1\geq\max\{\Delta,c\}$. 
        \end{itemize}
    \end{proof}

    With Fact~\ref{fact:outer:uv} in mind, let us prove the theorem. 
    First, observe that since $G$ is $2$-connected, then Lemma~\ref{lem:structure:outerplanar}(\ref{outerplanar:hamiltonian}) gives that $G$ is hamiltonian.
    We call the edges of some fixed Hamilton cycle \textit{outer edges}. 
    
    In addition, by Lemma~\ref{lem:structure:outerplanar}(\ref{outerplanar:deg4deg2}), 
	there must be at least one structure, which we refer to as a gadget:
    \begin{itemize}
        \item[($D_4$)] a degree-$4$ vertex $v$ adjacent to two degree-$2$ vertices $u$ and $u'$, or
        \item[($D_3$)] a degree-$3$ vertex $v$ adjacent to a degree-$2$ vertex $u$.
    \end{itemize}
    Each of the edges $uv$ and $u'v$ (in the case of $D_4$) are outer edges.
    
    Consider a $D_4$ and let $w$ be the neighbor of $u$ where $uw$ is an outer edge and $w'$ be the neighbor of $u'$ where $u'w'$ is an outer edge as illustrated in Figure~\ref{fig:gadget:D4}.
    By Fact~\ref{fact:outer:uv}(\ref{it:outer:C3deg2}), since $d(v)=4<\max\{\Delta,c\}-1$ ($c\geq 6$ suffices), the edge $uv$ must be in a triangle and the only choice is $uvw$. 
    So $uw$ (and similarly $u'w'$) must be an edge.
    Furthermore, by Fact~\ref{fact:outer:uv}(\ref{it:outer:C4}), since $d(u)+d(v)=6<\max\{\Delta,c\}+2$ ($c\geq 5$ suffices), the edge $uv$ must be in a $C_4$.
    This must be $uww'v$.
    Furthermore, by Fact~\ref{fact:outer:uv}(\ref{it:outer:deg}), each of $d(w)$ and $d(w')$ is at least $\max\{\Delta,c\}-1$.

    Now consider $D_3$ and let $w$ be the neighbor of $u$ and $w'$ be the neighbor of $v$ via an outer edge. 
    The vertices $w$ and $w'$ must be distinct, otherwise $G$ cannot be $2$-connected.
    By Fact~\ref{fact:outer:uv}(\ref{it:outer:C3deg2}), since $d(v)=3<\max\{\Delta,c\}-1$, the edge $uv$ must be in a triangle and the only choice is $uvw$. 
    Furthermore, by Fact~\ref{fact:outer:uv}(\ref{it:outer:C4}), since $d(u)+d(v)=5<\max\{\Delta,c\}+2$, the edge $uv$ must be in a $C_4$.
    This must be $ww'$.

    Thus, Figure~\ref{fig:gadget:D4} and Figure~\ref{fig:gadget:D3} show the gadgets $D_4$ and $D_3$, respectively.

    \begin{figure}
        \centering
        \begin{subfigure}[b]{0.35\textwidth}
            \centering
            \begin{tikzpicture}[scale=1]
                \centering
                \useasboundingbox (-1.2,-0.7) rectangle (1.2,2.0);
                \coordinate (u) at (-1,0);
                \coordinate (v) at (0,0);		
                \coordinate (up) at (1,0);
                \coordinate (w) at (-1,1);
                \coordinate (wp) at (1,1);
                \begin{pgfonlayer}{fore}
                    \draw (u) node[vtx] {};
                    \draw (v) node[vtx] {};
                    \draw (up) node[vtx] {};
                    \draw (w) node[vtx] {};
                    \draw (wp) node[vtx] {};
                \end{pgfonlayer}
                \begin{pgfonlayer}{back}
                    \draw (u) node[vtx,label={below,yshift=-4pt:$u$}] {};
                    \draw (v) node[vtx,label={below,yshift=-4pt:$v$}] {};
                    \draw (up) node[vtx,label={below:$u'$}] {};
                    \draw (w) node[vtx,label={above:$w$}] {};
                    \draw (wp) node[vtx,label={above:$w'$}] {};
                \end{pgfonlayer}
                \begin{pgfonlayer}{main}
                    \draw[edg] (w) -- (u);
                    \draw[edg] (u) -- (v);
                    \draw[edg] (v) -- (up);
                    \draw[edg] (up) -- (wp);
                    \draw[thinedg] (w) -- (v);
                    \draw[thinedg] (v) -- (wp);
                    \draw[dashedg] (w) -- (wp);
                \end{pgfonlayer}
            \end{tikzpicture}
            \caption{Gadget $D_4$}
         	\label{fig:gadget:D4}
        \end{subfigure} 
        \begin{subfigure}[b]{0.35\textwidth}
            \centering
            \begin{tikzpicture}[scale=1]
                \centering
                \useasboundingbox (-0.2,-0.7) rectangle (1.7,2.0);
                \coordinate (u) at (0,0);
                \coordinate (v) at (1.5,0);
                \coordinate (w) at (0,1);
                \coordinate (wp) at (1.5,1);
                \begin{pgfonlayer}{fore}
                    \draw (u) node[vtx] {};
                    \draw (v) node[vtx] {};
				\draw (w) node[vtx] {};
				\draw (wp) node[vtx] {};
			\end{pgfonlayer}
			\begin{pgfonlayer}{back}
				 \draw (u) node[vtx,label={below,yshift=-4pt:$u$}] {};
				 \draw (v) node[vtx,label={below,yshift=-4pt:$v$}] {};
				 \draw (w) node[vtx,label={above:$w$}] {};
				 \draw (wp) node[vtx,label={above:$w'$}] {};
			\end{pgfonlayer}
			\begin{pgfonlayer}{main}
				\draw[edg] (w) -- (u);
				\draw[edg] (u) -- (v);
				\draw[edg] (v) -- (wp);
				\draw[thinedg] (w) -- (v);
				\draw[thinedg] (v) -- (wp);
				\draw[dashedg] (w) -- (wp);
			\end{pgfonlayer}
		\end{tikzpicture}
		\caption{Gadget $D_3$}
         \label{fig:gadget:D3}
	\end{subfigure} 
	\caption{Gadgets for the proof of Theorem~\ref{thm:outerplanar}. Bold edges are outside edges. Dotted edges belong to $G-\{u,v,u'\}$ or $G-\{u,v\}$, respectively.}
\end{figure}

    So now we have a strong characterization of the vertices $w$ and $w'$.

    \begin{fact}\label{fact:outer:D4D3}
        If $G$ is not B-colorable by $\max\{\Delta,c\}$ colors and $c\geq 6$, then $\Delta\geq c$ and the following are true for gadgets:
        \begin{itemize}
            \item[($D_4$)] Both $w$ and $w'$ have degree $\Delta$.
            \item[($D_3$)] The vertex $w$ has degree $\Delta$ and the vertex $w'$ has degree at least $\Delta-2$. 
        \end{itemize}
    \end{fact}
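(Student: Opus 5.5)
The plan is to argue directly from the gadget structure already established (Figures~\ref{fig:gadget:D4} and~\ref{fig:gadget:D3}) with a \emph{vertex-deletion} argument, rather than through Fact~\ref{fact:outer:uv}. Deleting vertices keeps $G-u$ outerplanar with maximum degree at most $\Delta$, so induction gives a B-coloring of it with $\max\{\Delta,c\}$ colors. When we then extend the coloring, we count conflicts in $G$ itself. This restores every $C_4$ constraint that passes through the deleted vertex.

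\textbf{Gadget $D_4$.} Delete $u$, B-color $G-u$, and color the two edges at $u$ in a deliberate order: $uw$ first, then $uv$. At the moment $uw$ is colored, $uv$ is still uncolored, so the conflicts of $uw$ are the following.
\begin{itemize}
\item The $d(w)-1$ other edges at $w$.
\item Edges parallel to $uw$ in a $C_4$ through $u$. Since $N(u)=\{v,w\}$, such a $C_4$ has the form $u\,w\,x\,v$ with $x\in N(v)\cap N(w)\setminus\{u\}$. We have $N(v)=\{u,w,u',w'\}$, and $u'$ is not adjacent to $w$ (else $u'$ would have a third neighbor), so $x=w'$ and the only parallel edge is $vw'$.
\end{itemize}
Hence $uw$ has at most $d(w)$ conflicts.

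Next, $uv$ conflicts with $uw$, with $vw,vu',vw'$, and with the parallel edge $ww'$. That is at most $5$ conflicts, which is fewer than $c$ when $c\ge 6$. So if $d(w)<\max\{\Delta,c\}$, both edges can be colored and $G$ is B-colorable, a contradiction. Thus $d(w)\ge\max\{\Delta,c\}$. Since $d(w)\le\Delta$, this forces simultaneously $\Delta\ge c$ and $d(w)=\Delta$. The same argument with $u'$ deleted gives $d(w')=\Delta$.

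\textbf{Gadget $D_3$ and the vertex $w$.} The same scheme works. Delete $u$ and color $uw$ first: it has $d(w)-1$ conflicts at $w$ plus the single parallel edge $vw'$ from the $C_4$ $u\,w\,w'\,v$, for at most $d(w)$ conflicts. Then $uv$ has at most $4$ conflicts, namely $uw,vw,vw'$ and the parallel edge $ww'$. Exactly as before, this yields $\Delta\ge c$ and $d(w)=\Delta$.

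\textbf{Gadget $D_3$ and the bound on $w'$.} This is the main obstacle. My first attempt is to delete both $u$ and $v$, B-color $G-\{u,v\}$, and then color $vw'$, $vw$, $uw$, $uv$ in that order, using Lemma~\ref{lem:structure:outerplanar}(\ref{outerplanar:noK23}) to bound common neighbors. Since $v$ is already a common neighbor of $w$ and $w'$, they have at most one further common neighbor $y$, so each $C_4$ term in the counts contributes at most one edge.
\begin{itemize}
\item $vw'$ then sees at most about $d(w')$ conflicts.
\item $uw$ and $uv$ are handled as above.
\item The tight step is $vw$. Its count is $(d(w)-2)+1+1=\Delta$, made up of the colored edges at $w$, the edge $vw'$, and the possible parallel edge $w'y$. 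This is one too many.
\end{itemize}
To win the missing unit, I would exploit the slack in $vw'$ when $d(w')\le\Delta-3$: choose the color of $vw'$ to coincide with a color already appearing at $w$ but not conflicting with $vw'$, which must exist by counting. Alternatively, I would use that $w'y$ and $wy$ together with the triangle $vww'$ restrict how many conflicts at $w$ are genuinely distinct. If this still does not close the gap, the fallback is to apply Fact~\ref{fact:outer:conflict} at $v$: one of $vu,vw,vw'$ must have at least $\max\{\Delta,c\}$ conflicts. Bounding $vu$ and $vw$ via the triangle case of Lemma~\ref{lem:main:outerplanar} (with $z=u$) should leave $vw'$ as that edge, and its conflict count is roughly $d(w')+2$, giving $d(w')\ge\Delta-2$.
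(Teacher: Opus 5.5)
Your arguments for $D_4$ and for $w$ in $D_3$ are correct and essentially match the paper's: delete the degree-$2$ vertex $u$, then color $uw$ before $uv$. Your direct count even gives $d(w)\ge\max\{\Delta,c\}$, hence $\Delta\ge c$ and $d(w)=\Delta$ at once, which is cleaner than the paper's remark about the case $c>\Delta$.

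The gap is the bound $d(w')\ge\Delta-2$, which the proposal does not actually prove. In your order $vw',vw,uw,uv$ there are two tight steps, not one. Once $vw$ and $vw'$ are colored, $uw$ sees the $d(w)-1=\Delta-1$ colored edges at $w$. It also sees $vw'$, which is parallel to $uw$ in the $C_4$ $u\,w\,w'\,v$. That makes $\Delta$ colored conflicts. Your ``handled as above'' relied on $d(w)<\max\{\Delta,c\}$, which you have just refuted.

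The fallback fails as well. The triangle case of Lemma~\ref{lem:main:outerplanar} bounds the conflicts of $vw$ only by $3+\Delta+3-4=\Delta+2\ge\max\{\Delta,c\}$; concretely these are $vu$, $vw'$, the $\Delta-1$ other edges at $w$, and $w'y$. So Fact~\ref{fact:outer:conflict} applied at $v$ may be witnessed by $vw$ and then says nothing about $w'$. Even if it were witnessed by $vw'$, that edge can have $d(w')+3$ conflicts: $vu$, $vw$, the $d(w')-1$ other edges at $w'$, and the parallels $uw$ and $wy$. This yields only $d(w')\ge\Delta-3$.

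The missing idea is simply the order: color the edge with slack, $vw'$, after the two tight edges at $w$. The paper B-colors $G-\{u,v\}$ and then colors $vw$, $uw$, $vw'$, $uv$. With this order:
\begin{itemize}
\item $vw$ sees at most $(d(w)-2)+1=\Delta-1$ colored conflicts, since $uw$, $vu$ and $vw'$ are still uncolored and the only parallel edge is $w'y$;
\item $uw$ sees at most $d(w)-1=\Delta-1$, since its parallel edge $vw'$ is still uncolored;
\item $vw'$ sees at most $1+(d(w')-1)+2=d(w')+2\le\Delta-1$ under the assumption $d(w')\le\Delta-3$;
\item $uv$ sees at most $4<c$.
\end{itemize}

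Your color-coincidence idea can also be completed. Colored first, $vw'$ has at most $d(w')$ colored conflicts, hence at least $3$ available colors. The $\Delta-2$ colored edges at $w$ miss only two palette colors, so $vw'$ can reuse the color of some edge at $w$. That single coincidence saves one unit at both $vw$ and $uw$. As written, however, this is only a sketch, and it overlooks that $uw$ needs the saving too.
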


    \begin{proof}[Proof of Fact~\ref{fact:outer:D4D3}]
        First we show that every vertex of degree 2 either has a neighbor of degree $\Delta$ or both neighbors have degree $\Delta-1$.
        Towards a contradiction, consider a vertex $u$ of degree two with neighbors $v_1$ and $v_2$ such that $d(v_1)\geq d(v_2)$ and suppose $d(v_1)\leq\Delta-1$ and $d(v_2)\leq\Delta-2$. 

        By Lemma~\ref{lem:main:outerplanar}, the edge $uv_1$ has at most $d(v_1)+2+2-3=d(v_1)+1$ conflicts, including the edge $uv_2$. The edge $uv_2$ has at most $d(v_2)+1$ conflicts.

        First, B-color the graph $G-u$ with at most $\max\{\Delta,c\}$ colors. 
        Second color $uv_1$. 
        There are at most $d(v_1)\leq\Delta-1\leq\max\{\Delta,c\}-1$ colored edges that conflict with $uv_1$ (since $uv_2$ is uncolored) and so a remaining color is available for $uv_1$.
        Third color $uv_2$. 
        There are at most $d(v_2)+1\leq\max\{\Delta,c\}-1$ colored edges that conflict with $uv_2$ and so a remaining color is also available for $uv_2$.        
        This is a B-coloring of $G$, a contradiction. 
        Therefore, either $d(v_1)\geq\Delta$ or $d(v_1)=d(v_2)=\Delta-1$ and observe that if $c>\Delta$, then there is still a color available in both cases. 

        Turning to $D_4$, the above argument implies that, as long as $4<\max\{\Delta,c\}-1$ ($c\geq 6$ suffices), it must be the case that $d(w)=d(w')=\Delta$. 

        Turning to $D_3$, the above argument also implies that $d(w)=\Delta$.
        To bound $d(w')$, we will suppose $d(w')\leq\Delta-3$.
        First, B-color $G-\{u,v\}$ (the ``$-$'' symbol in this case defines the graph $G$ with vertices $u$ and $v$ deleted) with at most $\max\{\Delta,c\}$ colors and color the following edges in order: $vw$, $uw$, $vw'$ and $uv$.
        We know that $d(u)=2$, $d(v)=3$, and $d(w)=\Delta$.
        Consider Lemma~\ref{lem:main:outerplanar}.
    
        \begin{itemize}
            \item[$vw$:]  The number of edges that conflict with $vw$ is at most $d(v)+d(w)+\min\{d(v),d(w),4\}-4=\Delta+2$, including the three uncolored edges $uw$, $vw'$, $uv$. 
            Thus, there is a color available for $vw$. 
            \item[$uw$:]  The number of edges that conflict with $uw$ is at most $d(u)+d(w)+\min\{d(u),d(w),5\}-3=\Delta+1$, including the two uncolored edges $vw'$, $uv$. 
            Thus, there is a color available for $uw$. 
            \item[$vw'$:]  The number of edges that conflict with $vw'$ is at most $d(v)+d(w')+\min\{d(v),d(w'),5\}-3=d(w')+3\leq\Delta$, including the uncolored edge $uv$. 
            Thus, there is a color available for $uw$. 
            \item[$uv$:]  The number of edges that conflict with $uv$ is at most $d(u)+d(v)+\min\{d(u),d(v),5\}-3=4$. 
            Since $\max\{\Delta,c\}\geq 5$, there is a color available for $uw$.
        \end{itemize}

        This concludes the proof of Fact~\ref{fact:outer:D4D3}.
    \end{proof}

    So now we have that $\Delta\geq c$ and the set of all $w$'s and $w'$'s in a gadget have degree $\Delta$ except those vertices that serve only as a $w'$ in gadgets $D_3$, which have degree at least $\Delta-2$.

    Now, delete the vertices in each of the gadgets, other than those labeled $w$ and $w'$. 
    That is, for a $D_4$, delete $\{u,v,u'\}$ and for a $D_3$ delete $\{u,v\}$.

    The resulting degrees of the vertices that were in gadgets are at least $\Delta-2-2=\Delta-4$ (in the case of $D_4$ or a $w$ vertex in some $D_3$) or at least $(\Delta-2)-1-1=\Delta-4$ (in the case of vertices that are only $w'$ vertices in $D_3$'s). This is because any vertex labeled $w$ or $w'$ can be a part of at most two gadgets and deleting a gadget will delete at most one outer edge from such a vertex. 

    If $\max\{\Delta,c\}>8$ (so $c\geq 9$ suffices), then no new vertices of degree at most $4$ will be created. 
    But this is a contradiction to Lemma~\ref{lem:structure:outerplanar}\eqref{outerplanar:deg4deg2} because no new gadgets will be created but all the existing ones will be deleted.

    Thus, having $\max\{\Delta,9\}$ colors suffices to B-color an outerplanar graph with maximum degree $\Delta$. 
\end{proof}

\section{Proof of Theorem~\ref{thm:planar}, the planar case.}

We take note of the following well-known observations of planar graphs:
\begin{lemma}
    The following properties are true for planar graphs:
    \begin{itemize}
        \item The property of planarity is hereditary. That is, it is closed under vertex deletion.
        \item The property of planarity is minor-closed. That is, it is closed under edge-contraction. 
    \end{itemize}
	Let $G$ be a planar graph on $n$ vertices.
	\begin{enumerate}
		\item 	$G$ has no topological copy of $K_5$.
		\item 	$G$ has no topological copy of $K_{3,3}$.
		\item 	If $n\geq 3$, then $e(G)\leq 3n-6$. Otherwise, $e(G)\leq 3n$. \label{it:planar:edges}
            \item   If $G$ is bipartite and $n\geq 3$, then $e(G)\leq 2n-4$. Otherwise if $G$ is bipartite, then $e(G)\leq 2n$. \label{it:biplanar:edges}
            \item   The minimum degree of $G$ is at most five. \label{it:planar:deg}
	\end{enumerate}
    \label{lem:structure:planar}
\end{lemma}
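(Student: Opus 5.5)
The statement just given, Lemma~\ref{lem:structure:planar}, collects standard facts about planar graphs, and I would prove them in the order in which they depend on one another. The two closure properties come straight from the definition. Deleting a vertex from a plane drawing, together with its incident edges, leaves a plane drawing. Contracting an edge $uv$ can be done inside a thin neighborhood of the curve representing $uv$: drag $v$ along that curve to $u$, and reroute the other edges at $v$ along the same path. No crossings are created, and if $uv$ lies in a triangle we delete the resulting parallel edges, so the graph stays simple.

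Items (\ref{it:planar:edges}) and (\ref{it:biplanar:edges}) come from Euler's formula $n-e+f=1+k$ for a plane graph with $k$ components, which I would establish by induction on the number of edges. The base case is a forest. For the step, remove an edge lying on a cycle: this merges exactly two faces, by the Jordan curve theorem. Adding edges only strengthens the bound we want, so when $n\ge 3$ we may assume $G$ is connected and has at least three edges. Then every face has a boundary walk of length at least $3$, and each edge is counted at most twice over all boundary walks, so $3f\le 2e$. Substituting into Euler's formula gives $e\le 3n-6$. If $G$ is bipartite it has no triangles, so every face length is at least $4$, which gives $4f\le 2e$ and hence $e\le 2n-4$. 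When $n\le 2$ the weaker bounds $e\le 3n$ and $e\le 2n$ are immediate, since $e\le 1$. The only delicate point here is handling faces whose boundary is not a cycle (trees, bridges). I would deal with it by counting boundary walks with multiplicity rather than boundary cycles.

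Item (\ref{it:planar:deg}) then follows by summing degrees. If $n\ge 3$, we have $\sum_v d(v)=2e\le 6n-12<6n$, so some vertex has degree at most $5$. If $n\le 2$, every degree is at most $1$.

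For (a) and (b), note that a topological copy of $H$ is planar exactly when $H$ is planar: suppressing degree-$2$ subdivision vertices turns a drawing of the copy into a drawing of $H$, and conversely. Also, since planarity is hereditary and a subgraph of a plane graph is plane, any topological copy of $H$ contained in $G$ would itself be planar. So it suffices to show that $K_5$ and $K_{3,3}$ are not planar. For $K_5$ this follows from (\ref{it:planar:edges}), since $10>3\cdot 5-6=9$. For $K_{3,3}$ it follows from (\ref{it:biplanar:edges}), since $9>2\cdot 6-4=8$.

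I expect the main obstacle to be rigor rather than ideas, namely the topological ingredients: the Jordan curve theorem used in the face-merging step of Euler's formula, and the face-length accounting. As the paper does for the outerplanar lemma, I would cite a standard text such as West~\cite{DW} for these.
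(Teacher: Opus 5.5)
Your proposal is correct. It is the standard textbook derivation (Euler's formula, face-length counting, the degree sum, and nonplanarity of $K_5$ and $K_{3,3}$ from the edge bounds), which is what the paper relies on: it states this lemma as well known without proof, just as it defers to West's textbook for the outerplanar analogue. One trivial slip: in the bipartite case with $n=3$ you cannot add edges to reach three edges while staying bipartite, but there $e(G)\le 2=2n-4$ holds directly, and in any case your face-length argument only needs $G$ to be connected with at least two edges.
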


Lemma~\ref{lem:main:planar} below is the main technical lemma for Theorem~\ref{thm:planar} which gives a bound on the number of conflict edges. 
The proof is in Section~\ref{sec:main:planar}.

\begin{lemma}      \label{lem:main:planar}	
    Let $G$ be a connected planar graph that is not $K_2$ and let $u$ and $v$ be adjacent vertices. 

    If $\min\{\deg(u),\deg(v)\}=1$, then the number of edges that form a conflict with $uv$ is at most $\max\bigl\{\deg(u),\deg(v)\bigr\}-1$.

    Otherwise, the number of edges that form a conflict with $uv$ is at most:
    \begin{equation}\label{bound}
        2\deg(u) + 2\deg(v) + \left\lfloor\frac{1}{2}\min\bigl\{\deg(u),\deg(v)\bigr\}\right\rfloor - 6.
    \end{equation}
\end{lemma}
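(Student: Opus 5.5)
Write $a=\deg(u)$, $b=\deg(v)$ and assume $a\le b$. The conflict edges of $uv$ split into two kinds. The \emph{adjacent} ones are the other edges at $u$ and at $v$, and there are exactly $a+b-2$ of them. The \emph{parallel} ones are edges $xy$ with $x\in N(u)\setminus\{v\}$, $y\in N(v)\setminus\{u\}$, $x\neq y$, so that $uxyv$ is a $4$-cycle.

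If $\min\{a,b\}=1$, say $a=1$, then $u$ has no neighbour other than $v$. So no $4$-cycle contains $uv$, and the only conflicts are the $b-1$ other edges at $v$. This gives the first claim.

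For the main bound I must show that the number $P$ of parallel edges satisfies $P\le a+b+\lfloor a/2\rfloor-4$. Set $A=N(u)\setminus\{v\}$ and $B=N(v)\setminus\{u\}$, with $|A|=a-1$ and $|B|=b-1$. Let $H$ be the graph on $A\cup B$ whose edges are the parallel edges. A vertex in $A\cap B$ is a common neighbour $z$ of $u,v$, so $uvz$ is a triangle; such a vertex can lie on both sides. I will double each such vertex into an $A$-copy and a $B$-copy. This makes $H$ a bipartite graph on at most $a+b-2$ vertices, with each edge $xy$ joining $x\in A$ to $y\in B$.

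Now I use planarity. Contract the edge $uv$ to a single vertex $w$, or equivalently add $uv$ back and view things around it. The graph formed by $u$, $v$, the edges from $u$ to $A$ and from $v$ to $B$, and $H$ is planar. Each parallel edge $xy$ together with the path $x\,u\,v\,y$ forms a $4$-cycle through $uv$. Consider the planar embedding and the rotation around the edge $uv$. The neighbours of $u$ and $v$ appear in a cyclic order around the contracted vertex, and the parallel edges cannot cross. This should make $H$ (with the doubled vertices) an outerplanar-like bipartite graph. The point is that the vertices of $A\cup B$ all lie on one face after deleting the contracted vertex $w$, since they are all adjacent to $w$. Since $H$ is bipartite and outerplanar, it has at most $(n_H)-1+\lfloor n_H/2\rfloor$ edges, roughly. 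The sharper tool is to use $K_{3,3}$-freeness together with $u,v$: then a bipartite outerplanar graph on $n$ vertices has at most $\frac{3n-4}{2}$ edges, since all of its faces have length at least $4$.

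This crude count gives about $\tfrac32(a+b-2)$. That is too weak when $a\ll b$, so I must exploit the asymmetry. The key structural claim I would prove is that each vertex $x\in A$ that is not in $A\cap B$ has its $H$-neighbours forming a consecutive interval in the cyclic order of $B$. Moreover, distinct $x$'s intervals overlap in at most one endpoint, so $\sum_{x\in A}(\deg_H(x)-1)\le |B|-1$ plus corrections. This gives $P\le (a-1)+(b-1)+(\text{corrections})$. The corrections come from the triangle vertices $z\in A\cap B$ and from wrap-around at the two sides of the edge $uv$. I expect these corrections to account for the extra $\lfloor a/2\rfloor-2$. A vertex $z$ in both $A$ and $B$ can be an endpoint of parallel edges on both sides. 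Planarity around $uv$ allows only two "sides" of $uv$, and each extra crossing-type contribution uses up two vertices of $A$, which is where the halving comes from. Nailing down this exact constant — the $\lfloor \min/2\rfloor-6$ term with degenerate small cases (such as $a=2$, or $A=B$) — is the main obstacle. I would first try to verify tightness on $K_{2,\Delta}$-type and wheel-like examples to guide the charging.
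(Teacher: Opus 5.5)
Your reduction is correct: there are $\deg(u)+\deg(v)-2$ adjacent conflicts, so the target is $P\le a+b+\lfloor a/2\rfloor-4$ for the number $P$ of parallel edges, and the case $\min\{a,b\}=1$ is handled properly. There is, however, a genuine gap: the proposal stops exactly where the lemma becomes nontrivial. The interval claim is only asserted. Even if you grant it, it controls only the edges leaving private neighbours of $u$ and yields $P\le a+b-3$, which is the right bound only when $W_{uv}=N(u)\cap N(v)=\emptyset$. Everything involving common neighbours is deferred to ``corrections'' you expect to be at most $\lfloor a/2\rfloor-1$, and these corrections are the entire source of the $\lfloor \min/2\rfloor$ term.

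For example, take $N(u)=\{v,x_1,x_2,z\}$ with $z\in N(v)$, and let the triangle $uvz$ separate $x_1$ from $x_2$. Make each $x_i$ adjacent to $z$ and to all neighbours of $v$ on its side, and make $z$ adjacent to the neighbour of $v$ next to it on each side. For $b\ge 4$ this planar graph has $P=b+2=a+b-2$, so the lemma's bound is attained only through edges at $z$.

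The doubled graph $H$ also does not inherit the structure you want. Contracting $uv$ shows that the \emph{undoubled} neighbourhood of the new vertex is outerplanar. But an edge $zy$ with $z\in W_{uv}$ and $y\in W_v$ may lie on $v$'s side of the triangle $uvz$, as it does in the example. Such an edge is not an edge from ``the $A$-copy of $z$'' into an interval of $B$, and doubling also counts edges inside $W_{uv}$ twice. The $\tfrac{3n-4}{2}$ bound is, as you note, a dead end.

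The missing idea is a proof that the surplus created by common neighbours is at most about $|W_{uv}|+1$ and at most about $\min\{|W_u|,|W_v|\}$. Since $|W_u|+|W_{uv}|=\deg(u)-1$, balancing these two bounds gives $\lfloor\min\{\deg(u),\deg(v)\}/2\rfloor$. Your heuristic that each extra unit ``uses up two vertices of $A$'' (one common neighbour and one private neighbour of $u$) is exactly this balance, but it must be proved.

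The paper does this with forbidden topological $K_{3,3}$ and $K_5$:
\begin{itemize}
\item the graph of parallel edges among private neighbours is a forest;
\item each component meets at most two common neighbours;
\item the auxiliary graph on $W_{uv}$ has maximum degree $2$ and no cycles;
\item each component carrying surplus consumes distinct private neighbours of $u$ and of $v$.
\end{itemize}

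Your embedding route can also be completed, as follows. Let $k=|W_{uv}|\ge 1$. The triangles $uvz$ with $z\in W_{uv}$ cut the plane into $k+1$ regions. In each region, the private neighbours $X_i$ of $u$, the private neighbours $Y_i$ of $v$, and the bounding common neighbours lie on one face in a known cyclic order; you can see this by contracting $uv$ region by region. So the parallel edges in a region form a non-crossing graph on a convex polygon in which consecutive vertices of $X_i$ (or of $Y_i$) are forbidden sides. Let $\epsilon_i,\eta_i\in\{0,1\}$ indicate $X_i\neq\emptyset$ and $Y_i\neq\emptyset$. Counting $2n-3$ minus the forbidden sides gives at most $|X_i|+|Y_i|+1+\epsilon_i+\eta_i$ edges in a region between two common neighbours, and at most $|X_i|+|Y_i|+\epsilon_i\eta_i$ in each of the two regions incident with $uv$. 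Summing gives $P\le \deg(u)+\deg(v)-4+r$, where $r$ is the number of regions containing a private neighbour of $u$. Since $r\le\min\{k+1,\deg(u)-1-k\}\le\lfloor \deg(u)/2\rfloor$, and symmetrically for $v$, this gives the bound. As written, though, the proposal proves only the easy parts.
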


With these important lemmas, we proceed to the proof of the planar result. 

\begin{proof}[Proof of Theorem~\ref{thm:planar}]
    With foresight, we will set $c=22$ but use $c$ for convenience and to prevent confusion. 

    In this case, we can proceed by induction on $|V(G)|$.
    The base case of $\binom{|V(G)|}{2}\leq \max\{2\Delta,c\}$ is trivial because the edges can be totally multicolored.

    Now we consider the following theorem of Borodin and Ivanova~\cite{BI}:
    \begin{theorem}[Borodin, Ivanova~\cite{BI}]\label{thm:BI}
        Let $G$ be a planar graph with minimum degree $\delta\geq 3$. 
        There is a vertex $u$ of degree $d\geq\delta$ with $d-2$ neighbors $w_1,\ldots,w_{d-2}$ such that one of the following occurs:
        \begin{enumerate}[(i)]
            \item $d=3$ and $\deg(w_1)\leq 10$, \label{it:BI:3}
            \item $d=4$ and $\deg(w_2)\leq 5$ and $\deg(w_1)\leq 9$, \label{it:BI:4a}
            \item $d=4$ and $\deg(w_2)=6$ and $\deg(w_1)\leq 8$, \label{it:BI:4b}
            \item $d=4$ and $\deg(w_2)=\deg(w_1)=7$, \label{it:BI:4c}
            \item $d=5$ and $\deg(w_3)=4$ and $\deg(w_2)=\deg(w_1)=5$, \label{it:BI:5a}
            \item $d=5$ and $\deg(w_3)=5$ and $\deg(w_2)\in\{5,6\}$ and $\deg(w_1)\in\{5,6,7\}$, \label{it:BI:5b}
            \item $\delta=5$ and $\deg(w_3)=\deg(w_2)=\deg(w_1)=6$. \label{it:BI:5c}
        \end{enumerate}
    \end{theorem}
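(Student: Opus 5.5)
We would prove Theorem~\ref{thm:BI} by discharging on a counterexample. Suppose $G$ is a plane graph with $\delta\geq 3$ that contains none of the configurations \eqref{it:BI:3}--\eqref{it:BI:5c}. We may assume $G$ is connected and fix an embedding. Give each vertex $v$ the charge $\deg(v)-4$ and each face $f$ the charge $|f|-4$. By Euler's formula the total charge is $\sum_v(\deg v-4)+\sum_f(|f|-4)=4e-4n-4|F|=-8<0$. We will redistribute charge without changing the total and reach a nonnegative charge everywhere, which is a contradiction.

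First we translate ``no configuration'' into neighbourhood conditions. The configurations say that $d-2$ neighbours are small, so their absence says that every vertex of degree $d\le 5$ has at least three neighbours that are large in the relevant sense.
\begin{itemize}
\item A $3$-vertex has at least two neighbours of degree $\ge 11$, by \eqref{it:BI:3}.
\item For a $4$-vertex, sort its neighbour degrees $a_1\le a_2\le a_3\le a_4$. Then $(a_1,a_2)$ avoids the patterns in \eqref{it:BI:4a}--\eqref{it:BI:4c}. Hence either $a_2\ge 10$, or $a_1\le 5$ and $a_2\ge 10$, or $a_1=6$ and $a_2\ge 9$, or $a_1=7$ and $a_2\ge 8$, and so on. In particular, the two largest of its four neighbours are ``fairly large''.
\item For a $5$-vertex, the three smallest neighbour degrees avoid \eqref{it:BI:5a} and \eqref{it:BI:5b}. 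When $\delta=5$, \eqref{it:BI:5c} also forbids three neighbours of degree $6$.
\end{itemize}

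Next come the discharging rules. Faces of size $\ge4$ already have nonnegative charge. We let them keep it, or give $\tfrac12$ per incident small vertex if needed. All negative charge sits on $3$-, $4$- (only through incident triangles, since a $4$-vertex has charge $0$) and $5$-vertices. We send charge from a large vertex $v$ across each incident triangular face $vxy$ to the small vertices $x,y$. The amount depends on $\deg(v)$: roughly $\tfrac{\deg(v)-4}{\deg(v)}$ per incident face, split among the small vertices on it. This guarantees that a vertex of degree $k\ge 6$ never ends negative; it gives at most $k$ times its per-face budget. Actually, we realise vertices of degree $3$ and $5$ are the only negative vertices, while $4$-vertices start at $0$. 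So the rules are these.
\begin{itemize}
\item A $3$-vertex needs $1$ and gets $\tfrac12$ from each of its two neighbours of degree $\ge11$. An $11^+$-vertex can afford $\tfrac12$ to many neighbours, since $\tfrac12 k\le k-4$ once $k\ge 8$.
\item A $5$-vertex has charge $+1$, so it is not negative. The difficulty is that $5$-vertices and $4$-vertices may have to give charge to $3$-vertices, or to triangular faces when faces carry charge $-1$.
\end{itemize}
Hence a cleaner scheme is the one with charges $2\deg(v)-6$ on vertices and $|f|-6$ on faces, total $-12$. Under it only triangles are negative, each needing $3$, and $3$-vertices have charge $0$. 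Each vertex $v$ gives to each incident triangle an amount $\phi(\deg v,\text{neighbour degrees})$. It remains to choose $\phi$ so that every triangle collects $3$, using the neighbourhood conditions above, and no vertex overspends. We take $\phi(3)=0$, $\phi(4)=\tfrac12$, $\phi(5)=\tfrac45$, $\phi(k)=\tfrac{2k-6}{k}$ for $k\ge6$, and then add adjustments. A triangle with a $3$-vertex $x$ and two $11^+$-vertices gets $2\cdot\tfrac{16}{11}\approx 2.9$, which is slightly short. So large vertices must give extra to triangles containing a $3$-vertex, and we compensate by having them give less to triangles made of three large vertices.

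We will verify case by case: each triangle type, by the degree multiset of its vertices, gets at least $3$, and each vertex type gives at most $2\deg(v)-6$.

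The main obstacle is the tight cases \eqref{it:BI:4a}--\eqref{it:BI:5c}. The thresholds $10,9,8,7$ for $4$-vertices, and $5,6,7$ for $5$-vertices, show that the rules must be nearly exact. We expect to need second-order rules, for example a $5$-vertex whose neighbours are mostly $6$- or $7$-vertices passing charge along. We would first tune the rules on the extremal configurations, using the fact that each one is sharp, and then check the remaining degree combinations by a finite case analysis.
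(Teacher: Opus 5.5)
\paragraph{A plan rather than a proof.} The paper gives no proof of this theorem. It quotes it from Borodin and Ivanova, where it is the result of a long and finely tuned discharging argument. Your proposal sets up a discharging framework but stops exactly where that work begins. The final rules are never fixed: your $\phi$ is explicitly provisional, and the ``adjustments'' and ``second-order rules'' are not specified. The check that every face is fed and no vertex overspends is deferred to a case analysis you have not carried out. All the difficulty sits at the sharp thresholds: $10$ for $3$-vertices, $(5,9),(6,8),(7,7)$ for $4$-vertices, and $(4,5,5),(5,6,7),(6,6,6)$ for $5$-vertices. So what is missing is essentially the whole proof. In particular, you need rules for a $k$-vertex that are uniform in $k$, and an argument that it can afford them however its incident faces are arranged. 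There is no slack: in the Kleetope of the icosahedron every $3$-vertex sees three $10$-vertices.

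\paragraph{The pieces you specify are already incorrect.}
\begin{itemize}
\item With charges $2\deg(v)-6$ on vertices and $|f|-6$ on faces, faces of size $4$ and $5$ are also negative ($-2$ and $-1$). So it is false that only triangles need charge. You cannot triangulate first either, since adding edges changes degrees and can create or destroy configurations.
\item Your base $\phi$ fails not only on $(3,11,11)$-triangles. A $(4,5,10)$-triangle is compatible with the absence of (i)--(vi) and receives $\tfrac12+\tfrac45+\tfrac75=\tfrac{27}{10}<3$. The same happens for $(4,7,8)$ and $(5,5,8)$.
\item You under-use the hypothesis: excluding (i) forces \emph{all three} neighbours of a $3$-vertex to have degree at least $11$, not just two.
\item You take (vii) as printed, i.e.\ available only when $\delta=5$. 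Read that way the statement is false, so no choice of rules can work. Take a long cylinder of square grid and stellate every square by a $4$-vertex. Close both ends with caps made of $6$-vertices and pairwise nonadjacent $5$-vertices, arranged so that each end row of the grid alternates between degrees $7$ and $8$. This graph has $\delta=4$ and contains none of (i)--(vi). Every other item begins with ``$d=$'', so (vii) is surely meant as $d=5$, and your rules must use it for every $\delta$.
\end{itemize}
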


    Using Lemma~\ref{lem:structure:planar}\eqref{it:planar:deg}, we see that there is a vertex $u$ of degree at most $5$.
    If $\delta=\delta(G)\leq 2$, then let $u$ be a vertex of minimum degree $\delta$. 
    Otherwise, let $u$ be the vertex given by Theorem~\ref{thm:BI}.
    Let $d=\deg(u)$.

    If $d\geq 3$, then let $N(u)=\bigl\{v_1,v_2,w_1,\ldots,w_{d-2}\bigr\}$. 
    If $\deg(u)=2$, then let $N(u)=\{v_1,v_2\}$ and if $\deg(u)=1$, then let $N(u)=\{v\}$.
    By induction, $G-u$ can be B-colored by $\max\{2\Delta+4,c\}$ colors. 

    In each case below, the bound we get on the number of conflicting edges is derived from Lemma~\ref{lem:main:planar}. \\

    \noindent\textbf{Case 1. $d=1$.}

    The number of edges that conflict with $uv$ is at most $\deg(v)-1\leq\Delta-1$. 
    So there are at least $\max\{2\Delta+4,c\}-(\Delta-1)\geq\Delta+5\geq 1$ colors available for $uv$. \\
    
    \noindent\textbf{Case 2. $d=2$.}

    The number of edges that conflict with $uv_i$ for $i\in\{1,2\}$ is at most $2\cdot 2+2\deg(v_i)+\lfloor 2/2 \rfloor-6=2\deg(v_i)-1\leq 2\Delta-1$.

    If we color $uv_1$ first, then since $uv_2$ is uncolored, there are at least $\max\{2\Delta+4,c\}-(2\Delta-1)+1\geq 6$ colors available for $uv_1$.
    Then there are at least $\max\{2\Delta+4,c\}-(2\Delta-1)\geq 5$ colors available for $uv_2$. \\

    \noindent\textbf{Case 3. $d=3$.}

    The number of edges that conflict with $uv_i$ for $i\in\{1,2\}$ is at most $2\cdot 3+2\deg(v_i)+\lfloor 3/2 \rfloor-6=2\deg(v_i)+1\leq 2\Delta+1$.
    
    If we color $uv_1$ first, then since $uv_2$ and $uw_1$ are uncolored, there are at least $\max\bigl\{2\Delta+4,c\bigr\}-\bigl(2\Delta+1\bigr)+2\geq 5$ colors available for $uv_1$.
    Since $uw_1$ is uncolored, there are at least $\max\bigl\{2\Delta+4,c\bigr\}-\bigl(2\Delta+1\bigr)+1\geq 4$ colors available for $uv_2$.

    Finally, the number of edges that conflict with $uw_1$ is at most $2\cdot 3+2\deg(w_1)+\lfloor 3/2\rfloor-6\leq 2\deg(w_1)+1$.
    So, there are at least $\max\{2\Delta+4,c\}-\bigl(2\deg(w_1)+1\bigr)\geq \max\{2\Delta+4,c\}-(2\Delta+1)\geq 3$ colors available for $uw_1$. \\

    \noindent\textbf{Case 4. $d=4$.}

    The number of edges that conflict with $uv_i$ for $i\in\{1,2\}$ is at most $2\cdot 4+2\deg(v_i)+\lfloor 4/2 \rfloor-6=2\deg(v_i)+4\leq 2\Delta+4$.
    
    If we color $uv_1$ first, then since $uv_2$, $uw_1$, and $uw_2$ are uncolored, there are at least $\max\{2\Delta+4,c\}-(2\Delta+4)+3\geq 3$ colors available for $uv_1$.
    Since $uw_1$ and $uw_2$ are uncolored, there are at least $\max\{2\Delta+4,c\}-(2\Delta+4)+2\geq 2$ colors available for $uv_2$.

    The number of edges that conflict with $uw_1$ is at most $2\cdot 4+2\deg(w_1)+\lfloor 4/2\rfloor-6\leq 2\deg(w_1)+4$.
    So there are at least $\max\{2\Delta+4,c\}-(2\deg(w_1)+4)+1$ colors available for $uw_1$.
    Similarly, there are at least $\max\{2\Delta+4,c\}-(2\deg(w_2)+4)$ colors available for $uw_2$.

    In the case of Theorem~\ref{thm:BI}\eqref{it:BI:4a}, \eqref{it:BI:4b}, or \eqref{it:BI:4c}, the number of colors available for $uw_1$ is at least $\max\{2\Delta+4,c\}-(2\deg(w_1)+4)+1\geq \max\{2\Delta+4,c\}-(2\cdot 9+4)+1\geq \max\{2\Delta+4,c\}-21$ and the number of colors available for $uw_2$ is at least $\max\{2\Delta+4,c\}-(2\deg(w_2)+4)\geq \max\{2\Delta+4,c\}-(2\cdot 7+4)\geq \max\{2\Delta+4,c\}-18$.
    Hence $c\geq 22$ suffices. \\

    \noindent\textbf{Case 5. $d=5$.}

    The number of edges that conflict with $uv_i$ for $i\in\{1,2\}$ is at most $2\cdot 5+2\deg(v_i)+\lfloor 5/2 \rfloor-6=2\deg(v_i)+6\leq 2\Delta+6$.
    
    If we color $uv_1$ first, then since $uv_2$, $uw_1$, $uw_2$, and $uw_3$ are uncolored, there are at least $\max\{2\Delta+4,c\}-(2\Delta+6)+4\geq 2$ colors available for $uv_1$.
    Since $uw_1$, $uw_2$, and $uw_3$ are uncolored, there is at least $\max\{2\Delta+4,c\}-(2\Delta+6)+3\geq 1$ color available for $uv_2$.

    The number of edges that conflict with $uw_1$ is at most $2\cdot 5+2\deg(w_1)+\lfloor 5/2\rfloor-6\leq 2\deg(w_1)+6$.
    So there are at least $\max\{2\Delta+4,c\}-(2\deg(w_1)+6)+2$ colors available for $uw_1$, at least $\max\{2\Delta+4,c\}-(2\deg(w_2)+6)+1$ colors available for $uw_2$ and at least $\max\{2\Delta+4,c\}-(2\deg(w_3)+6)$ colors available for $uw_3$. 

    In the case of Theorem~\ref{thm:BI}\eqref{it:BI:5a}, \eqref{it:BI:5b}, or \eqref{it:BI:5c}, the number of colors available for $uw_1$ is at least $\max\{2\Delta+4,c\}-(2\deg(w_1)+6)+2\geq \max\{2\Delta+4,c\}-(2\cdot 7+6)+2\geq \max\{2\Delta+4,c\}-18$, the number of colors available for $uw_2$ is at least $\max\{2\Delta+4,c\}-(2\deg(w_2)+6)+1\geq \max\{2\Delta+4,c\}-(2\cdot 6+6)+1\geq \max\{2\Delta+4,c\}-17$, and the number of colors available for $uw_3$ is at least $\max\{2\Delta+4,c\}-(2\deg(w_3)+6)\geq \max\{2\Delta+4,c\}-(2\cdot 6+6)\geq \max\{2\Delta+4,c\}-18$.
    Hence $c\geq 19$ suffices. \\

    So in every case, having at least $\max\{2\Delta+4,22\}$ colors suffices to B-color the graph $G$. 

\end{proof}

\section{Proofs of Lemmas}

\subsection{Proof of Lemma~\ref{lem:main:outerplanar}} \label{sec:main:outerplanar}
	An edge that conflicts with $xy$ must either be incident to $x$ (there are $\deg(x)-1$ such edges), incident to $y$ (there are $\deg(y)-1$ such edges) or must be an edge parallel to $uv$ in some copy of a $C_4$. 

    An edge parallel to $xy$ cannot have both endvertices in $N(x)\cap N(y)$, otherwise that would result in a $K_4$, contradicting Lemma~\ref{lem:structure:outerplanar}\eqref{outerplanar:noK4}.

    An edge parallel to $xy$ should have one endvertex adjacent to $x$ and the other adjacent to $y$. 
    If $z_1z_2$ and $z_2z_3$ are edges that are parallel to $xy$ then it is not possible that both of $z_1$ and $z_3$ are neighbors of $x$ or that both of them are neighbors of $y$.
    Otherwise, $z_2$ is adjacent to either $y$ or $x$ respectively and so the graph induced by $\bigl\{x,y,z_1,z_2,z_3\bigr\}$ would have a copy of $K_{2,3}$, contradicting Lemma~\ref{lem:structure:outerplanar}\eqref{outerplanar:noK23}.

    So if $z_1z_2$ and $z_2z_3$ are edges that are parallel to $xy$, then without loss of generality, $z_1\in N(x)\setminus N(y)$, $z_2\in N(x)\cap N(y)$ and $z_3\in N(y)\setminus N(x)$.
    Moreover, for the graph induced by the edges that are parallel to $xy$, the components are copies of the path $P_3$ or the path $P_2$ and by Lemma ~\ref{lem:structure:outerplanar}\eqref{outerplanar:noK23} and \eqref{outerplanar:noK4}, it can be seen that these must be vertex-disjoint and there are at most two such paths. 

    Hence, the maximum number of edges parallel to $xy$ is 4 and if there are $k$ parallel edges, then $\deg(x),\deg(y)\geq k+1$. 
    Thus, the total number of conflict edges is at most $d(x)+d(y)-2+\min\bigl\{\deg(x)-1,\deg(y)-1,4\bigr\} = \deg(x)+\deg(y)+\min\bigl\{\deg(x),\deg(y),5\bigr\}-3$.

    If there is a degree-$2$ common neighbor of $x$ and $y$, then the maximum number of edges parallel to $xy$ is 2 and if there are $k$ parallel edges, then $d(x),d(y)\geq k+2$. 
    Thus, if $x$ and $y$ have a degree-$2$ common neighbor, then the total number of conflict edges is at most $\deg(x)+\deg(y)-2+\min\bigl\{\deg(x)-2,\deg(y)-2,2\bigr\} = \deg(x)+\deg(y)+\min\bigl\{\deg(x),\deg(y),4\bigr\}-4$. 
    
    If $xy$ is not in a $C_4$, then the edges that conflict with $xy$ are only the ones incident with either $x$ or $y$, and there are at most $\bigl(\deg(x)-1\bigr)+\bigl(\deg(y)-1\bigr)$ of such edges. \hfill$\Box$

\subsubsection{Proof of Lemma~\ref{lem:main:planar}} \label{sec:main:planar}
For adjacent vertices $u$ and $v$, let
\begin{itemize}
    \item $W_{uv}=N(u)\cap N(v)$,
    \item $W_u\;\,=N(u)-\bigl(N(v)\cup\{v\}\bigr)$, and
    \item $W_v\;\,=N(v)-\bigl(N(u)\cup\{u\}\bigr)$.
\end{itemize}

Let $G'$ be the graph consisting of edges not incident to $u$ or $v$ that are conflict edges with $uv$.
That is, these are edges with one endpoint in $W_u\cup W_{uv}$ and the other endpoint in $W_v\cup W_{uv}$.
In particular, there are no edges in $G'$ with both endvertices in $W_u$ or with both endvertices in $W_v$.

Let $G''$ be the subgraph of $G'$ induced by $W_u\cup W_v$. Note that every edge in $G''$ has one endpoint in $W_u$ and the other endpoint in $W_v$.

We will state several facts about $G'$ and $G''$ that will give a very specific structure to these graphs and will allow us to count the conflict edges.

Fact~\ref{fact:nocycles} establishes that there are very few cycles in $G'$.

\begin{fact}
    There is no cycle of length at least $5$ in $G'$. Furthermore, every $4$-cycle alternates between sets as follows: $W_u,W_{uv},W_v,W_{uv}$ and every $3$-cycle contains either one or two vertices from $W_{uv}$.

    In particular, $G''$ is acyclic.
    \label{fact:nocycles}
\end{fact}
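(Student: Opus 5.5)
The idea is to contract $u$, $v$ and the edge $uv$ into extra structure, find a $K_5$ or $K_{3,3}$ topological minor, and contradict Lemma~\ref{lem:structure:planar}. Every vertex of $G'$ lies in $W_u\cup W_v\cup W_{uv}$. Every vertex of $W_u\cup W_{uv}$ is adjacent to $u$, and every vertex of $W_v\cup W_{uv}$ is adjacent to $v$. Moreover, $G'$ has no edge inside $W_u$ and no edge inside $W_v$.

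First, a cycle $C$ in $G'$ of length at least $5$. Contract the edge $uv$ to a single vertex $z$. Since every vertex of $C$ is adjacent to $u$ or to $v$, the vertex $z$ is adjacent to every vertex of $C$. The result is a planar graph (planarity is minor-closed) containing a wheel with rim $C$. That alone is no contradiction, so we need more.

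We use that each edge of $C$ joins a vertex of $N(u)$ to a vertex of $N(v)$. Pick three vertices of $C$ and try to build a $K_{3,3}$ with parts $\{u,v,c\}$ and three vertices $a_1,a_2,a_3$ of $C$. A vertex of $C$ in $W_{uv}$ sees both $u$ and $v$, and the arcs of $C$ give the remaining paths. The plan is a case analysis on how many vertices of $C$ lie in $W_{uv}$.

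If $C$ has at least three vertices in $W_{uv}$, say $a_1,a_2,a_3$, we build a $K_{3,3}$ with parts $\{a_1,a_2,a_3\}$ and $\{u,v,x\}$. Here $x$ is obtained by contracting a suitable arc, using the vertices of $C$ outside $\{a_1,a_2,a_3\}$. Length at least $5$ guarantees such leftover vertices exist; otherwise we route along $C$.

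If $C$ has few vertices in $W_{uv}$, then since $W_u$ and $W_v$ are independent in $G'$, $C$ alternates among its $W_u$-vertices, $W_v$-vertices and $W_{uv}$-vertices. Take a $W_u$-vertex $p$ followed by a $W_v$-vertex $q$ on $C$. Then $u,p,q,v$ together with the rest of the cycle should give a subdivided $K_5$ or $K_{3,3}$: the arc of $C$ avoiding the edge $pq$ connects $p$ and $q$ again, and the extra vertices of $C$ attach to $u$ or $v$.

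For a $4$-cycle in $G'$, we rule out every pattern except $W_u,W_{uv},W_v,W_{uv}$ by the same method. A $4$-cycle using an edge of $G''$ (from $W_u$ to $W_v$), or with two consecutive $W_{uv}$ vertices, should yield a topological $K_{3,3}$ or $K_5$ with $u$ and $v$. For instance, two adjacent vertices of $W_{uv}$ together with $u$, $v$ and a further cycle vertex give a $K_5$-subdivision or $K_{3,3}$. Triangles are handled similarly: a triangle with no vertex in $W_{uv}$ needs an edge inside $W_u$ or inside $W_v$, which is impossible, and a triangle with all three vertices in $W_{uv}$ together with $u,v$ forms $K_5$. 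Hence a triangle has one or two vertices in $W_{uv}$.

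Finally, $G''$ is acyclic. It is bipartite between $W_u$ and $W_v$, so it has no triangles. Its $4$-cycles would avoid $W_{uv}$, contradicting the forced alternation pattern, and it has no cycles of length at least $5$ by the first part.

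The main obstacle is the $\geq 5$ case. A wheel is planar, so the contradiction must use that $u$ and $v$ are distinct and that their neighborhoods split the cycle in a structured way. Making the $K_{3,3}$ branch sets explicit for every distribution of $W_u$, $W_v$ and $W_{uv}$ vertices along $C$ is where the care goes. I would first try the uniform construction with parts $\{u,v,\cdot\}$ and three cycle vertices, splitting the cycle into arcs.
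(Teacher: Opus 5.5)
There is a genuine gap: the main claim, that $G'$ has no cycle of length at least $5$, is never actually established. In your case of three vertices $a_1,a_2,a_3\in W_{uv}$ on $C$, the $K_{3,3}$ with parts $\{a_1,a_2,a_3\}$ and $\{u,v,x\}$ cannot be built from $C$. Removing $a_1,a_2,a_3$ leaves arcs of $C$, each adjacent to at most two of the $a_i$, so no connected branch set $x$ reaches all three. The same obstruction defeats your planned ``uniform construction'' with $u$ and $v$ in the same part: once $u$ and $v$ are branch vertices, the third branch vertex of that part (or a contracted arc) has only two exits along $C$. The three-$W_{uv}$ case is instead a subdivided $K_5$ on $\{u,v,a_1,a_2,a_3\}$, with the three arcs of $C$ as the paths among the $a_i$. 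In the case of few $W_{uv}$-vertices you only assert that something ``should give'' a $K_5$ or $K_{3,3}$.

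The missing idea, which is what the paper uses, is to put $u$ and $v$ in \emph{opposite} parts and use the edge $uv$. Suppose $C$ contains vertices $x_1,y_1,x_2,y_2$ in this cyclic order, with $x_1,x_2\in W_u\cup W_{uv}$ and $y_1,y_2\in W_v\cup W_{uv}$. Then the four arcs of $C$ between them, the edges $ux_1,ux_2,vy_1,vy_2$, and $uv$ form a subdivided $K_{3,3}$ with parts $\{x_1,x_2,v\}$ and $\{y_1,y_2,u\}$. This completes your own idea. Let $p\in W_u$ and $q\in W_v$ be consecutive on $C$, with $C$ of length at least $4$. Since $G'$ has no edges inside $W_u$ or $W_v$, the other $C$-neighbor $q'$ of $q$ lies in $W_u\cup W_{uv}$, and the other $C$-neighbor $p'$ of $p$ lies in $W_v\cup W_{uv}$. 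So $p,q,q',p'$ is such a quadruple. Such a consecutive pair does exist when $C$ has length at least $5$ and at most two vertices in $W_{uv}$.

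The $4$-cycle classification has a second hole. Excluding $4$-cycles that use a $W_u$--$W_v$ edge or two consecutive $W_{uv}$-vertices still leaves the patterns $W_u,W_{uv},W_u,W_{uv}$ and $W_v,W_{uv},W_v,W_{uv}$, which you never rule out. They fall to the same $K_{3,3}$: for the first, take $x_1,x_2$ to be the two $W_u$-vertices and $y_1,y_2$ the two $W_{uv}$-vertices. Your handling of triangles and your deduction that $G''$ is acyclic are correct once the first two parts are in place.
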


\begin{proof}[Proof of Fact~\ref{fact:nocycles}]
    Suppose that $G'$ has a cycle such that there exist four vertices in cyclic order $x_1\in W_u\cup W_{uv}$, $y_1\in W_v\cup W_{uv}$, $x_2\in W_u\cup W_{uv}$, $y_2\in W_v\cup W_{uv}$.

    This would produce a topological $K_{3,3}$ with one set being $\{x_1,x_2,v\}$ and the other being $\{y_1,y_2,u\}$. Hence, these vertices $x_1,y_1,x_2,y_2$ cannot exist.

    Consider any cycle $v_1,v_2,\ldots,v_{\ell}$ of length $\ell\geq 4$.
    If each of the vertices is in $W_{uv}$, then one can find $x_1,y_1,x_2,y_2$ by choosing them to be consecutive.
    So, without loss of generality, let $v_3\in W_u$.
    If $v_1\not\in W_v$, then since there is no edge in $G'$ with both endvertices in $W_u$ or with both endvertices in $W_v$ we can set $x_1=v_1$, $y_1=v_2$, $x_2=v_3$, and $y_2=v_4$, a contradiction.
    So $v_1\in W_v$.

    If $\ell=4$, then it must be the case that $v_1\in W_v$, $v_2,v_4\in W_{uv}$ and $v_3\in W_u$.

    If $\ell\geq 5$ and $v_5\not\in W_v$, then we can set $x_1=v_5$, $y_1=v_4$, $x_2=v_3$, and $y_2=v_2$, a contradiction.

    If $\ell\geq 5$ and $v_5\in W_v$, then since $v_5v_1$ cannot be an edge, it is the case that $\ell\geq 6$ and we can set $x_1=v_6$, $y_1=v_5$, $x_2=v_3$, and $y_2=v_1$, a contradiction.

    As far as 3-cycles, since there are no edges with both endvertices in $W_u$ or with both endvertices in $W_v$, then any 3-cycle must contain at least one vertex from $W_{uv}$.
    But if $W_{uv}$ contains a 3-cycle, then adding $u$ and $v$ produces a copy of $K_5$, a contradiction.

    Since every cycle in $G'$ must contain a member of $W_{uv}$, the subgraph $G''$ is acyclic.
\end{proof}

Fact~\ref{fact:degtwo} establishes that no vertex has more than two neighbors in $W_{uv}$, which will be important for our expanded definition of a component in $G''$.

\begin{fact}
    Every vertex $x\in V(G')$ has at most two neighbors in $W_{uv}$.
    \label{fact:degtwo}
\end{fact}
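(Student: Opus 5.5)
The plan is a direct contradiction argument via a forbidden topological subgraph. Suppose some $x\in V(G')$ has three distinct neighbors $z_1,z_2,z_3\in W_{uv}$. Every vertex of $W_{uv}$ is adjacent to both $u$ and $v$. We will exhibit a topological copy of $K_{3,3}$ or of $K_5$, which Lemma~\ref{lem:structure:planar} forbids.

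\textbf{Case $x\notin W_{uv}$.} Here $x\in W_u\cup W_v$, so $x\neq u,v$. Take one side of the bipartition to be $\{z_1,z_2,z_3\}$ and the other to be $\{u,v,x\}$. Each $z_i$ is adjacent to $u$ and $v$ by the definition of $W_{uv}$, and to $x$ by assumption. This gives a genuine (unsubdivided) $K_{3,3}$ on six distinct vertices, which is a contradiction.

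\textbf{Case $x\in W_{uv}$.} Now $x$ is itself adjacent to $u$ and $v$, and $x\neq z_i$. The same six vertices $\{z_1,z_2,z_3\}$ and $\{u,v,x\}$ still form a $K_{3,3}$, since all nine required edges are present. (One could alternatively note that $\{u,v,x,z_i,z_j\}$ contains many edges, but this is not needed.) In fact the case split is unnecessary: the single $K_{3,3}$ with parts $\{u,v,x\}$ and $\{z_1,z_2,z_3\}$ works uniformly, because only distinctness and the nine adjacencies are used.

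\textbf{Distinctness.} The only point requiring care is that the six vertices are distinct. The $z_i$ lie in $W_{uv}$, which excludes $u$ and $v$ because the graph is simple. The vertex $x$ lies in $V(G')$, whose edges avoid $u$ and $v$, so $x\neq u,v$. Also $x\neq z_i$, since the $z_i$ are neighbors of $x$. This small bookkeeping step is the main obstacle, and it is routine, so the fact follows directly from Lemma~\ref{lem:structure:planar}.
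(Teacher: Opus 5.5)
Your proof is correct and is essentially the paper's argument: the paper also exhibits the copy of $K_{3,3}$ with parts $\{x,u,v\}$ and $\{z_1,z_2,z_3\}$ and invokes planarity. Your explicit distinctness check, and your remark that no case split on whether $x\in W_{uv}$ is needed, simply spell out what the paper leaves implicit.
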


\begin{proof}[Proof of Fact~\ref{fact:degtwo}]
    If $x\in V(G')$ has three distinct neighbors $z_1,z_2,z_3\in W_{uv}$, then there is a copy of $K_{3,3}$ with one set being $\{x,u,v\}$ and the other being $\{z_1,z_2,z_3\}$, a contradiction.
\end{proof}

As a result of Fact~\ref{fact:nocycles}, each connected component of $G''$ must be a (possibly degenerate) tree. In addition we will define special components.

\begin{definition}
    The vertex set $V(G'')=W_u\cup W_v$ is partitioned as follows:
    \begin{itemize}
        \item Let every nondegenerate connected component be a member of the partition and call it an \emph{ordinary nondegenerate component}.
        \item If $x\in W_u$ and $y\in W_v$ have two common neighbors in $W_{uv}$, make $\{x,y\}$ a member of the partition and call it a \emph{special component}.
        \item Let every $v\in V(G'')$ that is not in either an ordinary nondegenerate component or in a special component form an \emph{ordinary degenerate component} $\{v\}$.
    \end{itemize}
    Together, these are simply called the \emph{components} of $G''$.
\end{definition}

\begin{remark}
    A special component $\{x,y\}$ may either be a connected component that is a single edge in $G''$ or a pair of vertices that are isolated in $G''$. Taken together with their common neighbors in $W_{uv}$, say $w_1$ and $w_2$, those four vertices induce at least four edges and possibly five with $xy$ or $w_1w_2$. However, both edges cannot be present because, together with $u$ and $v$, this would form a $K_6-2K_2$ which has both a copy of $K_{3,3}$ and a topological copy of $K_5$. 
\end{remark}

It is necessary to establish that components indeed form a partition of $V\bigl(G''\bigr)$ so that we can analyze each component individually. For any component $C$, let $\partial C$ denote the neighbors of $C$ in $W_{uv}$.
That is, $\partial C=\bigl\{z\in W_{uv} : \exists x\in C\subseteq W_u\cup W_v,\; xz\in E(G)\bigr\}$.

\begin{fact}
    The components of $G''$ (including the special components) partition $V\bigl(G''\bigr)=W_u\cup W_v$.
    \label{fact:partition}
\end{fact}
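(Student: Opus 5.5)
Three things must be checked: the special components are well defined and pairwise disjoint; no vertex of a special component lies in a nondegenerate connected component of $G''$ other than the one formed by the pair itself; and the remaining vertices are exactly the singletons. Once these hold, every vertex of $W_u\cup W_v$ lies in exactly one member of the partition, because the ordinary degenerate components are by definition the leftover vertices.

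\textbf{Disjointness of special components.} Suppose $x\in W_u$ belongs to two special pairs $\{x,y\}$ and $\{x,y'\}$ with $y\neq y'$ in $W_v$. Fact~\ref{fact:degtwo} says $x$ has at most two neighbors in $W_{uv}$. Since each of $y$ and $y'$ shares two neighbors in $W_{uv}$ with $x$, both pairs must use the same two vertices, say $w_1,w_2$. Then $w_1$ has the three neighbors $x,y,y'$, all in $W_u\cup W_v$. To reach a contradiction, exhibit a $K_{3,3}$: take parts $\{w_1,w_2,u\}$ and $\{x,y,v\}$ and route $u$--$y$ through $y'$ or via $v$. If that routing does not work directly, instead build a $4$-cycle $x w_1 y w_2$, add $y'$ adjacent to $w_1$ and $w_2$, and add $u,v$ adjacent to both $w$'s. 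This gives $K_{2,5}$ on $\{w_1,w_2\}$ versus $\{x,y,y',u,v\}$, together with the edges $ux$ and $vy,vy'$. Derive a topological $K_{3,3}$ with parts $\{w_1,w_2,v\}$ and $\{y,y',u\}$, using the path $u$--$x$--$w$ where needed. The symmetric argument handles a vertex of $W_v$ in two special pairs.

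\textbf{Compatibility with nondegenerate components.} Let $\{x,y\}$ be special with common neighbors $w_1,w_2\in W_{uv}$. Suppose $x$ has a $G''$-neighbor $y'\neq y$, with $y'\in W_v$. Then the vertices $x,y',\dots$ together with $u,v,w_1,w_2$ should yield a subdivided $K_{3,3}$: parts $\{x,v,?\}$ and $\{w_1,w_2,u\}$, where $v$--$u$ is an edge, $x$--$u$ is an edge, $x$--$w_i$ are edges, $v$--$w_i$ are edges, and the third vertex is $y$, adjacent to $w_1,w_2$ and joined to $u$ via the path $y$--$y'$? That last adjacency is wrong, so the correct routing is $y'$--$x$. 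I expect this search for the right $K_{3,3}$ or $K_5$ subdivision, in each subcase, to be the main obstacle. I will try first to mimic the Remark ($K_6-2K_2$) and Fact~\ref{fact:nocycles} by choosing the parts $\{x,y,\cdot\}$ and $\{u,v,\cdot\}$ and looking for four disjoint paths.

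If the argument fails, I will reinterpret the definition so that a special pair absorbs its whole connected component. Then the only claim left is that no vertex lies in two special pairs, which the first step handles. With that done, the partition property follows immediately.
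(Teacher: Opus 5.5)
\textbf{Verdict: there is a gap.} The second half of the argument is never completed, and the fallback you offer would prove a different statement.

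\textbf{Step 1 is correct.} Once Fact~\ref{fact:degtwo} forces both pairs to use the same $w_1,w_2$, the sets $\{y,y',u\}$ and $\{w_1,w_2,v\}$ already span a genuine $K_{3,3}$. All nine edges are present because $y,y'\in W_v$ and $w_1,w_2\in W_{uv}$, so no path through $x$ and no $K_{2,5}$ detour is needed. This is exactly the paper's argument. Your first attempted routing ($u$ to $y$ ``through $y'$'') fails, since $y'\in W_v$ is not adjacent to $u$, but the $K_{3,3}$ you settle on is the right one.

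\textbf{Step 2 is left unresolved.} This step is the other half of the proof: a special pair $\{x,y\}$ cannot meet an ordinary nondegenerate component. You tried the parts $\{x,y,v\}$ and $\{w_1,w_2,u\}$, which need a $y$--$u$ path. The edge $xy'$ with $y'\in W_v$ cannot supply one, because $x$ is itself a branch vertex.

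The fix is to swap the roles of $u$ and $v$ and take the parts $\{x,y,u\}$ and $\{w_1,w_2,v\}$.
\begin{itemize}
\item The edges $xw_i$, $yw_i$, $uw_i$, $yv$, $uv$ are all present.
\item The missing $x$--$v$ connection is the path $x\,y'\,v$. It exists because $y'\in W_v$, and its inner vertex $y'\neq y$ is not a branch vertex.
\end{itemize}
The parts you actually wrote down do work, but for the symmetric subcase. There $y$ has a $G''$-neighbor $x'\in W_u$ with $x'\neq x$, and the needed $y$--$u$ path is $y\,x'\,u$.

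These two subcases are exhaustive. If $x$ or $y$ lies in a nondegenerate connected component of $G''$ other than $\{x,y\}$ itself, then by connectivity one of $x,y$ has a $G''$-neighbor outside $\{x,y\}$. Since every edge of $G''$ joins $W_u$ to $W_v$, that neighbor is a $y'\in W_v$ (for $x$) or an $x'\in W_u$ (for $y$).

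\textbf{The fallback is not acceptable.} Redefining a special component to absorb its whole connected component changes the partition, so it proves a statement about a different object. The later counting also depends on special components being exactly the pairs. For example, the bound $e[C\cup\partial C]\le 5=\|C\|+3$ for $C\in\C_s$ in Fact~\ref{fact:shadowedges} uses $\|C\|=2$.
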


\begin{proof}[Proof of Fact~\ref{fact:partition}]
    It suffices to show that a special component has an empty intersection with every other special component and with every ordinary nondegenerate component.

    If two special components $C_1,C_2$ have a nonempty intersection, then $\partial C_1=\partial C_2$, otherwise some vertex would have three neighbors in $W_{uv}$, contradicting Fact~\ref{fact:degtwo}.
    So, without loss of generality, there is an $x\in W_u$ and distinct $y_1,y_2\in W_v$ such that $x$, $y_1$, and $y_2$ have distinct common neighbors $z_1,z_2\in W_{uv}$.
    This produces a copy of $K_{3,3}$ with one set being $\bigl\{y_1,y_2,u\bigr\}$ and the other being $\bigl\{z_1,z_2,v\bigr\}$.
    See Figure~\ref{fig:partition:specials}.

    If a special component has a nonempty intersection with an ordinary nondegenerate component, then without loss of generality, there is an $x\in W_u$ and distinct $y,y'\in W_v$ such that $x$ and $y$ have common neighbors $z_1,z_2\in W_{uv}$ and there is an edge $xy'$.
    This produces a topological copy of $K_{3,3}$ with one set being $\bigl\{x,y,u\bigr\}$ and the other being $\bigl\{z_1,z_2,v\bigr\}$ (the path $xy'v$ connects $x$ to $v$).
    See Figure~\ref{fig:partition:specconn}.
\end{proof}

\begin{figure}
    \centering
    \begin{subfigure}[t]{0.48\textwidth}
        \centering
        \begin{tikzpicture}[scale=1]
            \ovaldiagram;
            \coordinate (x) at (Wu);
            \coordinate (y2) at ($(Wv)+(-0.3,0.3)$);
            \coordinate (y1) at ($(Wv)+(0.3,-0.5)$);
            \coordinate (z1) at ($(Wuv)+(-0.5,0.2)$);
            \coordinate (z2) at ($(Wuv)+(0.5,0.2)$);
            \begin{pgfonlayer}{forefore}
                \draw (x) node[vtx,label={above:$x$}] {};
                \draw (y1) node[vtx,label={above:$y_1$}] {};
                \draw (y2) node[vtx,label={above:$y_2$}] {};
                \draw (z1) node[vtx,label={below:$z_1$}] {};
                \draw (z2) node[vtx,label={below:$z_2$}] {};
            \end{pgfonlayer}
            \begin{pgfonlayer}{fore}
                \foreach\i in {1,2}{%
                    \draw[edg] (x) -- (z\i);
                    \foreach\j in {1,2}{%
                        \draw[edg] (y\j) -- (z\i);
                    }
                }
            \end{pgfonlayer}
        \end{tikzpicture}
	\caption{Diagram of Fact~\ref{fact:partition} when two special components have a nonempty intersection.} \label{fig:partition:specials}
    \end{subfigure} \hfill
    \begin{subfigure}[t]{0.48\textwidth}
        \centering
        \begin{tikzpicture}[scale=1]
            \ovaldiagram;
            \coordinate (x) at (Wu);
            \coordinate (yp) at ($(Wv)+(-0.3,0.3)$);
            \coordinate (y) at ($(Wv)+(0.3,-0.5)$);
            \coordinate (z1) at ($(Wuv)+(-0.5,0.2)$);
            \coordinate (z2) at ($(Wuv)+(0.5,0.2)$);
            \begin{pgfonlayer}{forefore}
                \draw (x) node[vtx,label={above:$x$}] {};
                \draw (y) node[vtx,label={above:$y$}] {};
                \draw (yp) node[vtx,label={above:$y'$}] {};
                \draw (z1) node[vtx,label={below:$z_1$}] {};
                \draw (z2) node[vtx,label={below:$z_2$}] {};
            \end{pgfonlayer}
            \begin{pgfonlayer}{fore}
                \foreach\i in {1,2}{%
                    \draw[edg] (x) -- (z\i);
                    \draw[edg] (y) -- (z\i);
                }
                \draw[edg] (x) -- (yp);
            \end{pgfonlayer}
        \end{tikzpicture}
        \caption{Diagram of Fact~\ref{fact:partition} when one special component has a nonempty intersection with a nontrivial connected component.} \label{fig:partition:specconn}
    \end{subfigure}
    \caption{Figures for Fact~\ref{fact:partition}}
\end{figure}

Now we must analyze the components of $G''$. Define the following:
\begin{itemize}
    \item   $\C$ is the set of components of $G''$.
    \item   $\C_s\subseteq \C$ is the set of special components of $G''$.
    \item   $\C_i\subseteq \C$ is the set of ordinary (i.e., not in $\C_s$) components $C$ of $G''$ for which $|\partial C|=i$.
\end{itemize}

\begin{fact}
    For every component $C\in\C$, it is the case that $|\partial C|\leq 2$. In particular, $\C=\C_s\cup\C_2\cup\C_1\cup\C_0$.
    \label{fact:bordernbrs}
\end{fact}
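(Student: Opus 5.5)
We have to show that every component $C$ of $G''$ has $|\partial C|\le 2$. For a special component this is immediate from the definition combined with Fact~\ref{fact:degtwo}: each of $x,y$ has at most two neighbors in $W_{uv}$, and they share the two common neighbors $z_1,z_2$, so $\partial C=\{z_1,z_2\}$. An ordinary degenerate component $\{x\}$ also satisfies $|\partial C|\le 2$ by Fact~\ref{fact:degtwo}. The real work is therefore an ordinary nondegenerate component $C$, which is a tree in $G''$ by Fact~\ref{fact:nocycles}, with at least one edge, and each of whose edges joins $W_u$ to $W_v$.

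\textbf{Setup.} Suppose, for contradiction, that $C$ has three distinct neighbors $z_1,z_2,z_3\in W_{uv}$. For each $i$ pick a vertex $x_i\in C$ with $x_iz_i\in E(G)$ (the $x_i$ need not be distinct). Since $C$ is connected and contains vertices of both $W_u$ and $W_v$ (it has an edge), every vertex of $C$ reaches both $u$ (through some vertex of $C\cap W_u$) and $v$ (through some vertex of $C\cap W_v$), using paths inside $C$.

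\textbf{Main step: a topological $K_{3,3}$.} Take one side to be $\{u,v,c\}$ and the other to be $\{z_1,z_2,z_3\}$, where $c$ is a branch vertex of $C$ corresponding to contracting $C$.
\begin{itemize}
\item $u$ and $v$ are adjacent to every $z_i$ directly, since $z_i\in W_{uv}$.
\item $c$ must reach each $z_i$ by internally disjoint paths through $C$. The natural route is to contract $C$ to a single vertex, which preserves planarity by the minor-closedness in Lemma~\ref{lem:structure:planar}. This yields a vertex adjacent to $z_1,z_2,z_3$, but the contraction then uses up all of $C$.
\item The difficulty is that $u,v$ are already adjacent to the $z_i$, so we do not need $C$ for that. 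The contracted vertex $c$ together with $u,v$ versus $z_1,z_2,z_3$ is a genuine $K_{3,3}$ minor. Since $G$ is planar and planar graphs are minor-closed, this is a contradiction.
\end{itemize}
Only the minor-closed property is used here, so there is no need to route subdivision paths by hand. Equivalently, one can find a subdivided $K_{1,3}$ inside $C\cup\{z_i\}$, since a tree spanning three attachment points contains a vertex joined to all three by disjoint paths. That gives the same topological $K_{3,3}$.

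\textbf{Expected obstacle.} The step I expect to need care is checking that the argument does not actually need $C$ to touch both $W_u$ and $W_v$, and that the special-component case is handled by Fact~\ref{fact:degtwo} rather than by connectivity. With the contraction argument, connectivity of $C$ alone suffices: a connected $C$ attached to three vertices of $W_{uv}$ already gives a $K_{3,3}$ minor. This yields $\C=\C_s\cup\C_2\cup\C_1\cup\C_0$.
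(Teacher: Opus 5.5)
Your proof is correct and follows the paper's argument. Special components and degenerate components are handled by Fact~\ref{fact:degtwo}, and an ordinary nondegenerate $C$ attached to three vertices $z_1,z_2,z_3\in W_{uv}$ yields a $K_{3,3}$ with sides $\{u,v,C\}$ and $\{z_1,z_2,z_3\}$. The one difference is that you contract the connected set $C$ to get a $K_{3,3}$ minor and then appeal to minor-closedness, whereas the paper builds a topological $K_{1,3}$ inside $C$ via Fact~\ref{fact:topK13} to get a subdivision; your version avoids the paper's case analysis on whether the $x_i$ are distinct and on path versus claw.
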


\begin{proof}[Proof of Fact~\ref{fact:bordernbrs}]
    By Fact~\ref{fact:degtwo}, no vertex has more than two neighbors in $W_{uv}$.
    Consequently, $|\partial C|=2$ for all $C\in\C_s$.

    Otherwise let $C\in\C-\C_s$, and suppose that there are three (not necessarily distinct) vertices in $V(C)$, called $x_1,x_2,x_3\in W_u\cup W_v$ and three distinct vertices $z_1,z_2,z_3\in W_{uv}$ such that $x_iz_i\in E(G)$ for $i=1,2,3$.

    We leave it to the reader to verify the following basic graph theory fact:
    \begin{fact}
        If $x_1$, $x_2$, and $x_3$ are distinct vertices in a connected graph, then either there is a path that contains all three vertices or there is a topological $K_{1,3}$ whose leaves are $x_1$, $x_2$, and $x_3$. \label{fact:topK13}
    \end{fact}

    If $\bigl\{x_1,x_2,x_3\bigr\}$ is a set of size at most 2, then since $C$ is connected, it is trivial to find a topological $K_{1,3}$ with leaves being $\bigl\{z_1,z_2,z_3\bigr\}$ and all other vertices being in $V(C)$.
    (The center of that $K_{1,3}$ is the member of $\bigl\{x_1,x_2,x_3\bigr\}$ with at least two neighbors in $\bigl\{z_1,z_2,z_3\bigr\}$.)
    Otherwise, $\bigl\{x_1,x_2,x_3\bigr\}$ is a set of size 3 and we can apply Fact~\ref{fact:topK13} to find a topological $K_{1,3}$ in $C$ with leaves being $\bigl\{x_1,x_2,x_3\bigr\}$. 
    In the case of a path, it is a degenerate $K_{1,3}$ in which the center is identified with the leaf that is not an endpoint of the path. 
    Then use the matching $\{x_1z_1,x_2z_2,x_3z_3\}$ to extend it to a topological $K_{1,3}$ with leaves being $\bigl\{z_1,z_2,z_3\bigr\}$.

    If the center of the $K_{1,3}$ is denoted by $w$, then there is a topological copy of $K_{3,3}$ with one set being $\bigl\{z_1,z_2,z_3\bigr\}$ and the other set being $\bigl\{w,u,v\bigr\}$.
    This is a contradiction to the initial assumption. Thus $|\partial C|\leq 2$ for all $C\in\C-\C_s$ as well.
    This concludes the proof of Fact~\ref{fact:bordernbrs}.
\end{proof}

Now we want to count the number of edges.
For any subset of vertices $W$ of $G'$, denote $e[W]=\bigl|E\bigl(G'[W]\bigr)\bigr|$.
Recall that we ignore any edges with both endpoints in $W_u$ or with both endpoints in $W_v$.
Also, for brevity, denote $\|C\|=|V(C)|$, for any $C\in\C$.

\begin{fact}
    Let $C\in\C$.
    \begin{itemize}
        \item   If $C\in\C_0$, then $e[C\cup\partial C]=\|C\|-1$
        \item   For $C\in\C_1$,
                \begin{itemize}
                    \item   if $\|C\|=1$, then $e[C\cup\partial C]=1=\|C\|$;
                    \item   if $\|C\|\geq 2$, then $e[C\cup\partial C]\leq\|C\|+1$.
                \end{itemize}
        \item   For $C\in\C_2$,
                \begin{itemize}
                    \item   if $\|C\|=1$, then $e[C\cup\partial C]\leq 3=\|C\|+2$;
                    \item   if $\|C\|=2$, then $e[C\cup\partial C]\leq 4=\|C\|+2$.
                    \item   if $\|C\|\geq 3$, then $e[C\cup\partial C]\leq\|C\|+3$.
                \end{itemize}
        \item   If $C\in\C_s$, then $e[C\cup\partial C]\leq 5=\|C\|+3$.
    \end{itemize}
    \label{fact:shadowedges}
\end{fact}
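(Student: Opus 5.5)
We prove each bullet by splitting $e[C\cup\partial C]$ into two pieces: the edges of $G''$ inside $C$, and the edges of $G'$ between $C$ and $\partial C$. Edges inside $\partial C\subseteq W_{uv}$ are deliberately left out of this count: they are handled globally later, since $G'[W_{uv}]$ is itself constrained by Fact~\ref{fact:nocycles}. (If the intended count includes them, the same case analysis applies with at most one such edge per pair $\partial C$.)

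\textbf{Edges inside $C$.} By Fact~\ref{fact:nocycles}, $G''$ is acyclic, so every ordinary component $C$ is a tree. It therefore contributes exactly $\|C\|-1$ internal edges.

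\textbf{Edges to $\partial C$.} These are counted using three facts:
\begin{itemize}
\item[(i)] Fact~\ref{fact:degtwo}: every vertex has at most two neighbors in $W_{uv}$.
\item[(ii)] Fact~\ref{fact:nocycles}: every cycle of $G'$ of length at least $5$ is excluded, and every $4$-cycle has the pattern $W_u,W_{uv},W_v,W_{uv}$.
\item[(iii)] The topological $K_{3,3}$ argument with $u$ and $v$ that was already used repeatedly.
\end{itemize}

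\textbf{Cases $\C_0$ and $\C_1$.} For $C\in\C_0$ the count is just $\|C\|-1$. For $C\in\C_1$ with $\partial C=\{z\}$:
\begin{itemize}
\item If $\|C\|=1$, there is exactly one edge to $z$.
\item If $\|C\|\ge 2$, we claim $z$ has at most two neighbors in $C$. Suppose $x_1,x_2,x_3\in C$ are all adjacent to $z$. Two of them, say $x_1,x_2$, lie on the same side, say $W_u$. The tree path between them has length at least $2$ and passes through $W_v$. Together with $z$, this path forms a cycle of length at least $5$ in $G'$, or a $4$-cycle with the wrong pattern (it would contain only one vertex of $W_{uv}$). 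Either way this contradicts (ii).
\end{itemize}
A neater way to handle all subcases: a vertex of $W_{uv}$ with two neighbors in the tree creates a cycle. Such a cycle must be a triangle or a $4$-cycle of the allowed pattern, and two such cycles through the same $z$ combine into a longer cycle. This gives at most $(\|C\|-1)+2=\|C\|+1$.

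\textbf{Case $\C_2$.} Let $\partial C=\{z_1,z_2\}$.
\begin{itemize}
\item If $\|C\|=1$, the vertex has at most $2$ edges to $\partial C$, plus possibly the edge $z_1z_2$ if that is counted; this gives the bound $3$.
\item If $\|C\|=2$, then $C$ is an edge $xy$. If $x$ and $y$ both saw both $z_i$, then $\{x,y\}$ would be special, not ordinary. So there are at most $3$ edges to $\partial C$, plus $xy$, giving at most $4$.
\item If $\|C\|\ge 3$, each $z_i$ has at most $2$ neighbors in $C$ by the $\C_1$ argument, giving $(\|C\|-1)+4$. One unit must still be removed. If both $z_1$ and $z_2$ have two neighbors in $C$, the two resulting cycles through the tree share a tree path. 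Combined with $u$ (adjacent to $z_1$, $z_2$ and the $W_u$-vertices) and $v$, this should yield a topological $K_{3,3}$, in the spirit of the proof of Fact~\ref{fact:bordernbrs}. Hence at most $\|C\|+3$.
\end{itemize}

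\textbf{Case $\C_s$.} A special component $\{x,y\}$ together with its two common neighbors $w_1,w_2$ spans the four edges $xw_1,xw_2,yw_1,yw_2$, plus at most one of $xy$ and $w_1w_2$ by the Remark. This gives at most $5$.

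The main obstacle is the $\C_2$ case with $\|C\|\ge3$: finding the forbidden structure when both boundary vertices attach twice to the tree. I would first locate the two attachment pairs along the tree paths, then route $u$ and $v$ through their $W_u$- and $W_v$-neighbors to build a topological $K_{3,3}$ with parts $\{z_1,z_2,\cdot\}$ and $\{u,v,\cdot\}$.
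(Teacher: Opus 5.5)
\textbf{There is a gap in the $\C_2$ case with $\|C\|\ge 2$.} Your handling of $\C_0$, $\C_1$, the case $\|C\|=1$ of $\C_2$, and $\C_s$ is fine. For $\C_1$ you derive the paper's Observation~\ref{obs:no2nbrs} from Fact~\ref{fact:nocycles} rather than building a $K_{3,3}$ directly, which is equally valid.

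The gap comes from your opening convention. The quantity $e[C\cup\partial C]$ counts all edges of $G'$ spanned by $C\cup\partial C$, and $G'$ does contain edges inside $W_{uv}$. Moreover, $\K_2$ in the later count is defined to \emph{exclude} every edge $z_1z_2$ with $\{z_1,z_2\}=\partial C$ for some $C\in\C_2\cup\C_s$. So these edges are not handled anywhere else and must be bounded here. You yourself count them for $\|C\|=1$ and for $\C_s$. Your fallback of adding ``at most one such edge'' yields $5$ for $\|C\|=2$ and $\|C\|+4$ for $\|C\|\ge 3$, both exceeding the claim.

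The $\|C\|\ge 3$ bookkeeping is also off. Without $z_1z_2$, $(\|C\|-1)+4$ already equals $\|C\|+3$. Moreover, the topological $K_{3,3}$ you hope to extract when both $z_i$ attach twice to the tree does not exist. Take $C$ to be the path $x_1y_1x_2y_2$ (with $x_i\in W_u$, $y_i\in W_v$), and let $z_1\sim x_1,y_1$ and $z_2\sim x_2,y_2$. Together with $u,v$ this graph is planar: put $x_2,y_2,z_2$ inside the cycle $ux_1y_1vu$ and $z_1$ outside. It attains $\|C\|+3$.

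\textbf{The missing idea} is that the edge $z_1z_2$ is exactly what must be traded off against the attachments. The paper does this with explicit subdivisions:
\begin{itemize}
\item For $\|C\|=2$, the edge $z_1z_2$ plus three attachments (say $xz_1,xz_2,yz_2$) gives a topological $K_5$ on $\{u,v,z_1,z_2,x\}$ via the path $xyv$.
\item For $\|C\|\ge 3$, the value $\|C\|+4$ would force $z_1z_2\in E(G)$ and two attachments per $z_i$. A topological $K_{3,3}$ is then built with $z_1z_2$ lying on a branch path.
\end{itemize}

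Within your own framework there is a shorter route. Suppose $z_1z_2\in E(G)$ and $a\neq b$ are neighbors in $C$ of $z_1$ and $z_2$ respectively. The tree path from $a$ to $b$ closes a cycle $z_1a\cdots bz_2z_1$ in $G'$ of length at least $4$ with two consecutive vertices in $W_{uv}$, which Fact~\ref{fact:nocycles} forbids. Hence:
\begin{itemize}
\item If $z_1z_2$ is an edge, then $z_1$ and $z_2$ have one and the same unique neighbor in $C$, so $e[C\cup\partial C]\le(\|C\|-1)+2+1=\|C\|+2$.
\item If it is not an edge, non-speciality gives $4$ for $\|C\|=2$, and your two-attachments-per-$z_i$ bound gives $\|C\|+3$ for $\|C\|\ge 3$.
\end{itemize}
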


\begin{proof}[Proof of Fact~\ref{fact:shadowedges}]  We start by making the following observation.
    \begin{observation}
    	No vertex in $W_{uv}$ has two neighbors in $W_u$ or two neighbors in $W_v$ that are in the same component. \label{obs:no2nbrs}
    \end{observation}
    To see why the observation is true, without loss of generality, suppose that $z\in W_{uv}$ has two neighbors $y_1,y_2\in W_v\cap V(C)$ for some component $C$.
    Note that $C\not\in\C_s$ by definition. 
    Since $y_1$ and $y_2$ are in the same connected component, there is a path from $y_1$ to $y_2$ in $G''$ that includes at least one vertex $x\in W_u$.
    This forms a topological copy of $K_{3,3}$ with one set being $\bigl\{y_1,y_2,u\bigr\}$ and the other set being $\bigl\{x,z,v\bigr\}$, a contradiction.
    See Figure~\ref{fig:no2nbrs}.
    This establishes Observation~\ref{obs:no2nbrs}. \\

    With Observation~\ref{obs:no2nbrs} in mind, most of the items become trivial.

    \begin{itemize}
        \item   If $C\in\C_0$, then all of the edges are in $C$ and since $C$ is a tree, the number of edges is precisely $\|C\|-1$.
        \item   If $C\in\C_1$, then there is exactly one $z\in W_{uv}$ that has neighbors in $C$.
        If $\|C\|=1$, then this is a total of 1 edge, which must exist if $C\in\C_1$.
        If $\|C\|\geq 2$, then Observation~\ref{obs:no2nbrs} gives that there are at most two edges between $C$ and $\partial C$, which gives a total of at most $\bigl(\|C\|-1\bigr)+2=\|C\|+1$ edges.
        \item   If $C\in\C_2$, then there are exactly two $z_1,z_2\in W_{uv}$ that have neighbors in $C$.
        \begin{itemize}
            \item  If $\|C\|=1$, then there must be two edges between $C$ and $\partial C$.
            Together with a possible edge $z_1z_2$, this gives at most $3=\|C\|+2$ edges.
            \item   If $\|C\|=2$, then, because $C\in\C_2$, the vertices of $C$ must be $x\in W_u$ and $y\in W_v$ and $xy$ must be an edge.
            Furthermore, there are at most three edges between $C$ and $\partial C$, otherwise $C$ would be in $\C_s$.
            Without loss of generality, suppose the edges between $C$ and $\partial C$ are $xz_1$, $xz_2$ and $yz_2$.
            If $z_1z_2$ is an edge, then this gives a topological $K_5$ with vertices $\{u,v,z_1,z_2,x\}$ that uses the path $xyv$, a contradiction.
            See Figure~\ref{fig:shadowedges:22edge}.
            Hence either $z_1z_2$ is an edge and there are at most two edges between $C$ and $\partial C$ or $z_1z_2$ is not an  edge and there are at most three edges between $C$ and $\partial C$.
            In either case, $e[C\cup\partial C]\leq 1+3=\|C\|+2$
            \item   If $\|C\|\geq 3$, observe that $e[C\cup\partial C]\leq\bigl(\|C\|-1\bigr)+5= \|C\|+4$, with equality if and only if $z_1z_2$ is an edge and each of $z_1,z_2$ has exactly one neighbor in $W_u$ and exactly one neighbor in $W_v$.
            We may, without loss of generality, assume that there are distinct $y_1,y_2\in W_v$ and $x\in W_u$ such that $xz_1,y_1z_2,y_2z_1$ are edges between $C$ and $\partial C$.
            By Fact~\ref{fact:topK13}, either $y_1$, $x$ and $y_2$ are in the same path or there exists a $w\in V(C)$ such that $C$ contains a topological copy of $K_{1,3}$ with $w$ as the center and $x,y_1,y_2$ as the leaves.
                \begin{itemize}
                    \item If the order of vertices in the path is $y_1,x,y_2$, then there is a topological $K_{3,3}$ with one set being $\bigl\{y_1,y_2,u\bigr\}$ and the other set being $\bigl\{x,z_1,v\bigr\}$. 
                    See Figure~\ref{fig:shadowedges:23path}.
                    \item If the order of vertices in the path is $x,y_1,y_2$ or $y_1,y_2,x$, let $x'$ be a vertex in $W_u$ that is along the path from $y_1$ to $y_2$, then there is a topological $K_{3,3}$ with one set being $\bigl\{y_1,y_2,u\bigr\}$ and the other set being $\bigl\{x,x',v\bigr\}$. 
                    This is because there are paths with vertices in the following order $x,z_1,y_1$ and $x,z_1,z_2,y_2$, as well as edges between the rest of the pairs. 
                    \item If there is a star centered at $w$, then there is a topological $K_{3,3}$ with one set being $\bigl\{x,y_1,y_2\bigr\}$ and the other set being $\bigl\{w,z_1,v\bigr\}$. 
                    This is because there are paths with endpoints $w,x$, and $w,y_1$ and $w,y_2$, as well as edges between the rest of the pairs. 
                \end{itemize}
            Since each case is a contradiction, $e[C\cup\partial C]\leq \|C\|+3$, as claimed.
        \end{itemize}
        \item   If $C\in\C_s$, then $C\cup\partial C$ spans $4$ vertices.
        Let $C=\{x,y\}$ with $x\in W_u$ and $y\in W_v$.
        If $e[C\cup\partial C]=6$ then the graph induced by $C\cup\partial C\cup\{u,v\}$ induces a copy of $K_6-2K_2$.
        That is, it is a complete graph on $6$ vertices with two nonadjacent edges missing (specifically $uy$ and $xv$).
        See Figure~\ref{fig:shadowedges:Cs}.
        Such a graph has a copy of $K_{3,3}$ (and a topological copy of $K_5$), a contradiction.
        Thus, there are at most $5=\|C\|+3$ edges induced by $C\cup\partial C$.
    \end{itemize}
\end{proof}

\begin{figure}
    \centering
    \begin{subfigure}[t]{0.48\textwidth}
        \centering
        \begin{tikzpicture}[scale=1]
            \ovaldiagram;
            \coordinate (x) at (Wu);
            \coordinate (y2) at ($(Wv)+(-0.3,0.3)$);
            \coordinate (y1) at ($(Wv)+(0.3,-0.5)$);
            \coordinate (z) at ($(Wuv)+(0,0.2)$);
            \begin{pgfonlayer}{forefore}
                \draw (x) node[vtx,label={above:$x$}] {};
                \draw (y1) node[vtx,label={above:$y_1$}] {};
                \draw (y2) node[vtx,label={above:$y_2$}] {};
                \draw (z) node[vtx,label={below:$z$}] {};
            \end{pgfonlayer}
            \begin{pgfonlayer}{fore}
                \foreach\i in {1,2}{%
                    \draw[edg] (z) -- (y\i);
                    \draw[pth] (x) -- (y\i);
                }
            \end{pgfonlayer}
        \end{tikzpicture}
	   \caption{Diagram of Observation~\ref{obs:no2nbrs}. The wavy line represents a path that begins at $y_1$, ends at $y_2$, contains $x$ and alternates between $W_v$ and $W_u$.}
        \label{fig:no2nbrs}
    \end{subfigure} \hfill
    \begin{subfigure}[t]{0.48\textwidth}
        \centering
        \begin{tikzpicture}[scale=1]
            \ovaldiagram;
            \coordinate (x) at (Wu);
            \coordinate (y) at (Wv);
            \coordinate (z1) at ($(Wuv)+(-0.5,0.2)$);
            \coordinate (z2) at ($(Wuv)+(0.5,0.2)$);
            \begin{pgfonlayer}{forefore}
                \draw (x) node[vtx,label={above:$x$}] {};
                \draw (y) node[vtx,label={above:$y$}] {};
                \draw (z1) node[vtx,label={below:$z_1$}] {};
                \draw (z2) node[vtx,label={below:$z_2$}] {};
            \end{pgfonlayer}
            \begin{pgfonlayer}{fore}
                \foreach\i in {1,2}{%
                    \draw[edg] (x) -- (z\i);
                }
                \draw[edg] (x) -- (y);
                \draw[edg] (y) -- (z2);
                \draw[edg] (z1) -- (z2);
            \end{pgfonlayer}
        \end{tikzpicture}
	   \caption{Diagram of Fact~\ref{fact:shadowedges} in the case where $C\in\C_2$, $\|C\|=2$ and there is an edge $z_1z_2$.}
        \label{fig:shadowedges:22edge}
    \end{subfigure}
    \caption{Figures for Fact~\ref{fact:shadowedges}}
\end{figure}

\begin{figure}
    \centering
    \begin{subfigure}[t]{0.48\textwidth}
        \centering
        \begin{tikzpicture}[scale=1]
            \ovaldiagram;
            \coordinate (x) at (Wu);
            \coordinate (y2) at ($(Wv)+(-0.3,0.3)$);
            \coordinate (y1) at ($(Wv)+(0.3,-0.5)$);
            \coordinate (z1) at ($(Wuv)+(-0.5,0.2)$);
            \coordinate (z2) at ($(Wuv)+(0.5,0.2)$);
            \begin{pgfonlayer}{forefore}
                \draw (x) node[vtx,label={above:$x$}] {};
                \draw (y1) node[vtx,label={above:$y_1$}] {};
                \draw (y2) node[vtx,label={above:$y_2$}] {};
                \draw (z1) node[vtx,label={below:$z_1$}] {};
                \draw (z2) node[vtx,label={below:$z_2$}] {};
            \end{pgfonlayer}
            \begin{pgfonlayer}{fore}
                \foreach\i in {1,2}{%
                    \draw[pth] (x) -- (y\i);
                }
                \draw[edg] (x) -- (z1);
                \draw[edg] (y2) -- (z1);
                \draw[edg] (y1) -- (z2);
                \draw[edg] (z1) -- (z2);
            \end{pgfonlayer}
        \end{tikzpicture}
	   \caption{Diagram of Fact~\ref{fact:shadowedges} in the case where $C\in\C_2$, $\|C\|\geq 3$ and $y_1$, $x$, and $y_2$ are in the same path. The wavy line represents a path that begins at $y_1$, ends at $y_2$, contains $x$ and alternates between $W_v$ and $W_u$.}
        \label{fig:shadowedges:23path}
    \end{subfigure} \hfill
    \begin{subfigure}[t]{0.48\textwidth}
        \centering
        \begin{tikzpicture}[scale=1]
            \ovaldiagram;
            \coordinate (x) at (Wu);
            \coordinate (y) at (Wv);
            \coordinate (z1) at ($(Wuv)+(-0.5,0.2)$);
            \coordinate (z2) at ($(Wuv)+(0.5,0.2)$);
            \begin{pgfonlayer}{forefore}
                \draw (x) node[vtx,label={above:$x$}] {};
                \draw (y) node[vtx,label={above:$y$}] {};
                \draw (z1) node[vtx,label={below:$z_1$}] {};
                \draw (z2) node[vtx,label={below:$z_2$}] {};
            \end{pgfonlayer}
            \begin{pgfonlayer}{fore}
                \foreach\i in {1,2}{%
                    \draw[edg] (x) -- (z\i);
                    \draw[edg] (y) -- (z\i);
                }
                \
                \draw[edg] (x) -- (y);
                \draw[edg] (z1) -- (z2);
            \end{pgfonlayer}
        \end{tikzpicture}
	   \caption{Diagram of Fact~\ref{fact:shadowedges} in the case where $C\in\C_s$ and $e[C\cup\partial C]=6$.}
        \label{fig:shadowedges:Cs}
    \end{subfigure}
    \caption{Figures for Fact~\ref{fact:shadowedges}}
\end{figure}

Now we are ready to complete the proof of Lemma~\ref{lem:main:planar}.
First, we will require a bit more notation:
\begin{itemize}
    \item Define a partition $\C_1=\C_1'(u)\cup\C_1'(v)\cup\C_1''$; such that
    \begin{itemize}
        \item for $\alpha\in\{u,v\}$, let $\C_1'(\alpha)$ denote the set of $C\in\C_1$ with $\|C\|=1$ such that its vertex is in $W_{\alpha}$,
        \item let $\C_1''$ denote the set of $C\in\C_1$ with $\|C\|\geq 2$.
    \end{itemize}
    \item Define a partition $\C_2=\C_2'(u)\cup\C_2'(v)\cup\C_2''\cup\C_2'''$; such that
    \begin{itemize}
        \item for $\alpha\in\{u,v\}$, let $\C_2'(\alpha)$ denote the set of $C\in\C_2$ with $\|C\|=1$ such that its vertex is in $W_{\alpha}$,
        \item let $\C_2''$ denote the set of $C\in\C_2$ with $\|C\|=2$, and
        \item let $\C_2'''$ denote the set of $C\in\C_2$ with $\|C\|\geq 3$.
    \end{itemize}
    \item Let $\K_2$ denote the set of edges $z_1z_2$ in $G\bigl[W_{uv}\bigr]$ such that there is no $C\in\C_2\cup \C_s$ for which  $\partial C=\{z_1,z_2\}$.  These are the only edges not accounted for by considering $C\cup\partial C$ for some $C\in\C$.
\end{itemize}

By Fact~\ref{fact:shadowedges}, the total number of edges that conflict with $uv$ is
\begin{align}
    \lefteqn{~\hspace{-24pt}
    \bigl(\deg(u)-1\bigr) + \bigl(\deg(v)-1\bigr) + |\K_2| + \sum_{C\in\C} e[C\cup\partial C]} \nonumber \\
    &\leq \bigl(|W_u|+|W_{uv}|\bigr) + \bigl(|W_v|+|W_{uv}|\bigr) + |\K_2| + \sum_{C\in\C} \|C\| \nonumber \\
    &\hspace{12pt} -  |\C_0|  + |\C_1''| + 2|\C_2'(u)| + 2|\C_2'(v)| + 2|\C_2''| + 3|\C_2'''| + 3|\C_s| \nonumber \\
    &= 2|W_u| + 2|W_v| + 2|W_{uv}| - |\C_0| \nonumber \\
    &\hspace{12pt} + |\K_2| + |\C_1''| + 2|\C_2'(u)| + 2|\C_2'(v)| + 2|\C_2''| + 3|\C_2'''| + 3|\C_s| . \label{eq:edgecount:planar}
\end{align}

Now we consider our two different conclusions, the general bound and the bound in which $\deg(u)$ and the degrees of its neighbors other than $v$ are small. \\

If $|W_{uv}|=0$ and $\min\{\deg(u),\deg(v)\}\geq 2$, then the number of conflict edges is at most
\begin{align*}
    2|W_u| + 2|W_v| - |\C_0| \leq 2\deg(u) + 2\deg(v) - 5 .
\end{align*}

Otherwise, we need an inequality to bound the number of members of $\C_2\cup\C_s$.
\begin{fact} Let $|W_{uv}|\geq 1$.
\begin{align}
    |\C_1'(u)| + |\C_1''| + |\C_2'(u)| + |\C_2''| + |\C_2'''| + |\C_s|     &\leq   |W_u|, \label{eq:consu:planar} \\
    |\C_1'(v)| + |\C_1''| + |\C_2'(v)| + |\C_2''| + |\C_2'''| + |\C_s| &\leq   |W_v|, \label{eq:consv:planar} \\
    2|\K_2| + |\C_1''| + 2|\C_2| + 2|\C_s|   &\leq   2|W_{uv}|, \label{eq:consuv:planar} \\
    |\K_2| + |\C_2| + |\C_s|     &\leq   |W_{uv}|-1. \label{eq:consuvm1:planar}
\end{align}
    \label{fact:Wuvedges}
\end{fact}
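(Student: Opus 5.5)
The first two inequalities come from an injection from components into $W_u$ (resp.\ $W_v$); the last two from a planarity count on an auxiliary graph built on $W_{uv}$.

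\textbf{Inequalities \eqref{eq:consu:planar} and \eqref{eq:consv:planar}.} Every component listed on the left of \eqref{eq:consu:planar} contains at least one vertex of $W_u$.
\begin{itemize}
\item This is immediate for $\C_1'(u)$ and $\C_2'(u)$.
\item A component in $\C_1''$, $\C_2''$ or $\C_2'''$ is a nondegenerate tree in $G''$. Every edge of $G''$ joins $W_u$ to $W_v$, so such a tree meets $W_u$.
\item A special component $\{x,y\}$ has $x\in W_u$ by definition.
\end{itemize}
By Fact~\ref{fact:partition} the components are disjoint, so this assignment is injective into $W_u$. The argument for \eqref{eq:consv:planar} is symmetric.

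\textbf{Inequalities \eqref{eq:consuv:planar} and \eqref{eq:consuvm1:planar}.} I would build an auxiliary multigraph $H$ on vertex set $W_{uv}$:
\begin{itemize}
\item for each $C\in\C_2\cup\C_s$, one edge joining the two vertices of $\partial C$ (which has size two by Fact~\ref{fact:bordernbrs});
\item for each edge in $\K_2$, that edge itself.
\end{itemize}
Each such $H$-edge is realised in $G$ by a path between its endpoints through $C$, or by a direct edge. Distinct components are vertex-disjoint, so these paths are internally disjoint.

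The key claim is that $H$ is a \emph{forest}, including having no parallel edges. Then $|E(H)|=|\K_2|+|\C_2|+|\C_s|\le |W_{uv}|-1$, which is \eqref{eq:consuvm1:planar}.
\begin{itemize}
\item A cycle $z_1z_2\cdots z_k$ in $H$ with $k\ge3$ lifts to a cycle in $G$ through $z_1,\dots,z_k$, all adjacent to both $u$ and $v$. Taking $z_1,z_2,z_3$ gives a topological $K_{3,3}$ with parts $\{u,v,w\}$ and $\{z_1,z_2,z_3\}$. Here $w$ is placed on the cycle segment from $z_1$ to $z_2$ (or $z_1$ itself) and joined to $z_3$ along the cycle. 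I would adapt Fact~\ref{fact:topK13}, or more simply note that $u$, $v$ and a cycle through $z_1,z_2,z_3$ yield a topological $K_5$ minus an edge plus paths, and handle the details there.
\item A double edge between $z_1$ and $z_2$ comes from two components, or a component and the edge $z_1z_2$. Together with $u$ and $v$ this gives a topological $K_{3,3}$: take parts $\{u,v,a\}$ and $\{z_1,z_2,b\}$, using a vertex inside a component. This is the same style as the Remark about $K_6-2K_2$, and Fact~\ref{fact:shadowedges} for $\|C\|=2$ already rules out the component-plus-edge case.
\end{itemize}

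For \eqref{eq:consuv:planar}, I would extend $H$ by adding, for each $C\in\C_1''$, a pendant half-edge at its single vertex of $\partial C$. Then I would show each $z\in W_{uv}$ is incident to at most two $H$-edges plus half-edges in total. This is a degree-type bound, proved like Fact~\ref{fact:degtwo} by taking three paths from $z$ into distinct components and building a $K_{3,3}$ with $u,v$. It would give $2|\K_2|+2|\C_2|+2|\C_s|+|\C_1''|\le 2|W_{uv}|$.

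Alternatively, \eqref{eq:consuv:planar} may follow from a forest bound in which each $\C_1''$ component at $z$ uses up a "slot" at $z$. Here $\C_1''$ components have $\|C\|\ge2$, so $z$ has two neighbours in $C$ (one in $W_u$, one in $W_v$), which is what a $K_{3,3}$ argument needs.

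\textbf{Main obstacle.} The hardest step is the per-vertex slot bound for \eqref{eq:consuv:planar}. I would try to show that three such attachments at one $z$, together with $u$ and $v$, produce a topological $K_{3,3}$ or $K_5$, with case analysis on whether the attachments lie in $W_u$ or $W_v$, as in Observation~\ref{obs:no2nbrs}. If this fails, I would fall back to proving \eqref{eq:consuv:planar} from \eqref{eq:consuvm1:planar} plus a separate bound $|\C_1''|\le |W_{uv}|+1$ via a planarity edge count.
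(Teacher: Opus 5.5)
Your architecture is the paper's. The first two inequalities come from an injection of the listed components into $W_u$ (resp.\ $W_v$). The last two come from an auxiliary graph on $W_{uv}$ with one edge per member of $\C_2\cup\C_s\cup\K_2$: it must be a forest, and, after adding one loop per $C\in\C_1''$, it has maximum degree two. You are right to attach loops only for $\C_1''$: three singletons of $\C_1$ hanging off one $z$ give a planar graph, although the paper's definition of $\Gamma$ says ``every $C\in\C_1$''. You are also right to worry about parallel edges, which the paper's simple graph $\Gamma$ tacitly assumes away.

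\textbf{No parallel edges.} Your justification here would fail.
The component-plus-edge case is empty by the definition of $\K_2$. It is not excluded by Fact~\ref{fact:shadowedges}, which explicitly allows $z_1z_2$ next to a component with boundary $\{z_1,z_2\}$.
For two components $C\neq C'$ with $\partial C=\partial C'=\{z_1,z_2\}$, a $K_{3,3}$ with sides $\{u,v,a\}$ and $\{z_1,z_2,b\}$ need not exist. Take $C=\{x\}$, $C'=\{x'\}$ with $x,x'\in W_u$ and nothing else: $v$ has no neighbour besides $u,z_1,z_2$, and the actual $K_{3,3}$ is $\{x,x',v\}$ versus $\{z_1,z_2,u\}$.
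Worse, if $x\in W_u$ and $y\in W_v$ are both adjacent to $z_1,z_2$, then together with $u,v$ they form a subgraph of the octahedron with parts $\{u,y\},\{v,x\},\{z_1,z_2\}$, which is planar. So no $K_{3,3}$ exists in that case.
The missing idea is the definition of special components:
\begin{itemize}
\item Shortest $z_1$--$z_2$ paths through $C$ and through $C'$ combine into a cycle of $G'$ of length at least $4$.
\item Fact~\ref{fact:nocycles} forces this to be a $4$-cycle $z_1xz_2y$ with $x\in W_u$, $y\in W_v$.
\item Then $\{x,y\}$ is a special component, so $C=C'$.
\end{itemize}

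\textbf{Cycles of length at least three.} Your $K_{3,3}$ for a cycle of $H$ through $z_1,z_2,z_3$ also fails. A vertex $w$ on the arc from $z_1$ to $z_2$ can reach $z_3$ along the cycle only through $z_1$ or $z_2$. Use instead either Fact~\ref{fact:nocycles}, as the paper does, or the subdivided $K_5$ on $u,v,z_1,z_2,z_3$. In the latter the edge $uv$ is essential, since $K_5-e$ is planar.

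\textbf{Degree bound.} You leave this as the main obstacle, and your remark that $z$ has two neighbours in a $\C_1''$ component is false: $C$ can be a single edge $xy$ with only $x\sim z$. It is also not needed. The following works, and is what the paper does. For each of three attachments at $z$, let $x_i$ be either the other end $z_i$ of a $\K_2$-edge, or a neighbour of $z$ in the component.
\begin{itemize}
\item $x_i$ is joined by an edge to one of $u,v$.
\item $x_i$ is joined to the other of $u,v$ by a path through $y_i$. Here $y_i$ is a neighbour of $x_i$ on the opposite side when $C\in\C_1''$, or the second boundary vertex $z_i$ when $C\in\C_2\cup\C_s$.
\item These routes are internally disjoint, because components are disjoint and, by the no-parallel-edge claim, the $z_i$ are distinct.
\end{itemize}
This gives a subdivided $K_{3,3}$ with sides $\{z,u,v\}$ and $\{x_1,x_2,x_3\}$.

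Your fallback would not suffice anyway. Twice \eqref{eq:consuvm1:planar} plus $|\C_1''|\le|W_{uv}|+1$ only bounds the left side of \eqref{eq:consuv:planar} by $3|W_{uv}|-1$, not by $2|W_{uv}|$.
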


\begin{proof}[Proof of Fact~\ref{fact:Wuvedges}]
    The case of inequality \eqref{eq:consu:planar} simply follows from the fact that the components of the type mentioned on the left-hand side of~\eqref{eq:consu:planar} partition a subset of $W_u$. 
    The case of inequality \eqref{eq:consv:planar} follows similarly when applied to $W_v$.

	For inequalities \eqref{eq:consuv:planar} and \eqref{eq:consuvm1:planar}, we create an auxiliary looped graph $\Gamma$ such that $V(\Gamma)=W_{uv}$ and edges as follows: 
    (a) For any distinct $z_1,z_2\in V(\Gamma)$, there is an edge $z_1z_2\in E(\Gamma)$ if and only if either there is a $C\in\C_2\cup\C_s$ such that $\partial C=\{z_1,z_2\}$ or $z_1z_2\in\K_2$. 
    (b) The vertex $z$ has a loop for every $C\in\C_1$ such that $\partial C=\{z\}$.

	The case of~\eqref{eq:consuv:planar} will follow from showing that the maximum degree of $\Gamma$ is at most 2, where loops count once. Let $z$ have degree 3 and for each edge incident to $z$, including loops, denote the vertices $x_i$ and $y_i$ as follows:
	\begin{itemize}
		\item If there exists a $z_i\in W_{uv}-\{z\}$ such that $zz_i$ is an edge in $\K_2$, then let $x_i=z_i$ and $y_i=z_i$ also.
		\item If $C\in\C_1''$ satisfies $\partial C=\{z\}$, then let $x_i$ is a neighbor of $z$ in $\bigl(W_u\cup W_v\bigr)\cap C$. 
        If $x_i\in W_u$ then let $y_i$ be a neighbor of $x_i$ in $W_v$. If $x_i\in W_v$ then let $y_i$ be a neighbor of $x_i$ in $W_u$. 
		\item If $C\in\C_2$ satisfies $\partial C=\{z,z_i\}$, then there is a path from the neighbor of $z$ in $C$ (let this neighbor be $x_i$) to $z_i$ (let $y_i=z_i$).
		\item If $C\in\C_s$ satisfies $\partial C=\{z,z_i\}$, then $C\cup\partial C$ has a path $zx_iz_i$, where $x_i\in W_u$. Let $y_i=z_i$.
	\end{itemize}

    This produces a topological copy of $K_{3,3}$ with sets $\{z,u,v\}$ and $\{x_1,x_2,x_3\}$. For $i=1,2,3$, there are edges $zx_i$. 
    In addition, either there is an edge $ux_i$ and a $vx_i$ path with $y_i$ as an intermediate vertex or an edge $vx_i$ and a $ux_i$ path with $y_i$ as an intermediate vertex. 
    Each such path is internally disjoint. 
    This produces a topological $K_{3,3}$ and is a contradiction. 
    Therefore, every vertex in $\Gamma$ has degree at most two and because the sum of the degrees is at most $2\bigl|V(\Gamma)\bigr|=2\bigl|W_{uv}\bigr|$, we have established \eqref{eq:consuv:planar}.

    As for inequality \eqref{eq:consuvm1:planar}, we want to show that $\Gamma$ has no cycles (other than the loops).  
    Indeed, for every edge $z_1z_2\in E(\Gamma)$, there is either an edge $z_1z_2\in E(G)$ or a distinct component $C$ such that there is a path from $z_1$ to $z_2$ such that all of its internal vertices are members of $C$. 
    So if $\Gamma$ has a non-loop cycle, then $G'$ has a cycle with at least three vertices in $W_{uv}$. However, Fact~\ref{fact:nocycles} gives that $G'$ can have no cycles with three vertices in $W_{uv}$.
    
    Thus $\Gamma$ has no non-loop cycles and if $E(\Gamma)$ denotes the non-loop edge set, then $\bigl|E(\Gamma)\bigr|\leq \bigl|V(\Gamma)\bigr|-1=\bigl|W_{uv}\bigr|-1$, establishing \eqref{eq:consuvm1:planar}.
\end{proof}

Now we use Fact~\ref{fact:Wuvedges} to bound the expression in \eqref{eq:edgecount:planar}.
Recall that if $|W_{uv}|=0$ and $\min\bigl\{\deg(u),\deg(v)\bigr\}\geq 2$, then the number of conflict edges is at most $2\deg(u)+2\deg(v)-5$.
Otherwise, there are three bounds we use for the number of conflict edges:
\begin{align*}
    \eqref{eq:edgecount:planar} &\leq    2|W_u| + 2|W_v| + 2|W_{uv}| \\
    &\hspace{36pt} + \min    \left\{
            \begin{array}{l}
                3|W_{uv}|-1,
                \\
                \min\bigl\{|W_u|,|W_v|\bigr\} + 2 \bigl(|W_{uv}| - 1\bigr) ,
                \\
                \bigl(|W_u|+|W_v|\bigr) + \bigl(|W_{uv}| - 1\bigr) 
            \end{array}
                \right\} .
\end{align*}

The first comes from adding $\eqref{eq:consuv:planar}+\eqref{eq:consuvm1:planar}$. The second comes from adding $\min\left\{\eqref{eq:consu:planar},\eqref{eq:consv:planar}\right\}+2\cdot\eqref{eq:consuvm1:planar}$. 
The third comes from adding $\eqref{eq:consu:planar}+\eqref{eq:consv:planar}+\eqref{eq:consuvm1:planar}$.
Finally, the minimum is taken over the right-hand side.

Now we find an upper bound in terms of $\deg(u)=\bigl|W_u\bigr|+\bigl|W_{uv}\bigr|+1$, $\deg(v)=\bigl|W_v\bigr|+\bigl|W_{uv}\bigr|+1$, and $\deg(u,v)=\bigl|W_{uv}\bigr|$. 
\begin{align}
    \eqref{eq:edgecount:planar} &\leq    2\deg(u) + 2\deg(v) - 6 \nonumber \\
    &\hspace{12pt} + \min\left\{
            \begin{array}{l}
                \deg(u,v)+1,\\
                \min\left\{\deg(u),\deg(v)\right\} - \deg(u,v) - 1 ,\\
                \deg(u)+\deg(v) - 3\deg(u,v) - 1 
            \end{array}\right\} .
    \label{eq:edgecount:final}
\end{align}

\newcommand{\m}{\mathfrak{m}}
\newcommand{\M}{\mathfrak{M}}
Let $\m(u,v)=\min\{\deg(u),\deg(v)\}$ and $\M(u,v)=\max\{\deg(u),\deg(v)\}$.
If $2\deg(u,v)\leq\m(u,v)-2$, then
\begin{align*}
    \deg(u,v)+1\leq \tfrac{1}{2}\m(u,v) .
\end{align*}
If $\m(u,v)-1\leq 2\deg(u,v)\leq \M(u,v)-1$, then by substituting the lower bound for $\deg(u,v)$,
\begin{align*}
    \min\left\{\deg(u),\deg(v)\right\} - \deg(u,v) - 1 &\leq \tfrac{1}{2}\m(u,v) -\tfrac{1}{2}.
\end{align*}
If $\M(u,v)\leq 2\deg(u,v)$, then
\begin{align*}
    \deg(u)+\deg(v) - 3\deg(u,v) - 1 &\leq \m(u,v) + \M(u,v) - 3\bigl(\tfrac{1}{2}\M(u,v)\bigr) - 1 \\
    &\leq \tfrac{1}{2}\m(u,v) + \tfrac{1}{2}\bigl(\m(u,v)-\M(u,v)\bigr)-1 \\
    &\leq \tfrac{1}{2}\m(u,v) -1.
\end{align*}

Note that the upper bound of $\frac{1}{2}\m(u,v)$ we obtain for  \eqref{eq:edgecount:final} is consistent with the case of $|W_{uv}|=\deg(u,v)=0$ and $\m(u,v)\geq 2$, which yields an upper bound of $2\deg(u)+2\deg(v)-5$.
Since \eqref{eq:edgecount:planar} must be an integer, the expression gives the desired general bound
\begin{align*}
    \eqref{eq:edgecount:planar} &\leq   2\deg(u) + 2\deg(v) -6  +\left\lfloor\frac{1}{2}\min\left\{\deg(u),\deg(v)\right\}\right\rfloor .
\end{align*} \hfill\qed 

\section{Concluding remarks}

In the proof of Theorem~\ref{thm:outerplanar}, there is only one place which requires $c>7$ and that is the very end of the proof. 
Thus, it may be possible to improve the constant term for outerplanar graphs to $\max\{\Delta,c\}$ for some $c<9$.

We note that a more specific lower bound for $q_B(G)$ is $\max\{\Delta,2\Delta_2\}$, where $\Delta_2$ is the largest $t$ for which $G$ contains a subgraph $K_{2,t}$. 
We call $\Delta_2$ the {\em maximum common degree} in this case.
Is it possible that there is a constant $c$ such that for any planar graph $G$ with maximum degree $\Delta$ and maximum common degree $\Delta_2$, $q_B(G)\leq\max\{\Delta,2\Delta_2,c\}$?

We could reduce the bound for planar graphs in Theorem~\ref{thm:planar} to an expression of the form $\max\{2\Delta+3,c\}$ if we could affirmatively resolve the following question:
\begin{question}
    Does there exist an absolute constant $c$ such that for every planar graph with minimum degree $\delta\geq 3$, there exists one of the following:
    \begin{itemize}
        \item a vertex of degree $3$ with a neighbor of degree at most $c$,
        \item a vertex of degree $4$ with two neighbors, each of degree at most $c$, or
        \item a vertex of degree $5$ with four neighbors, each of degree at most $c$?
    \end{itemize}
\end{question}
We note that it is not possible to replace the second bullet point to ``a vertex of degree $4$ with three neighbors, each of degree at most $c$'' because the double-wheel would contradict that. 
In the double-wheel, there are two vertices of degree $\Delta=n-2$ and $n-2$ vertices of degree $4$ such that each degree-$4$ vertex is adjacent to both degree-$\Delta$ vertices. 

\section{Acknowledgements}

The authors would like to thank Andr\'as Gy\'arf\'as for many valuable conversations and for comments on early versions of the manuscript. 
The first author's research was partially supported by Simons Foundation Collaboration Grant for Mathematicians \#709641 and this research was partially done while this author was on an MTA Distinguished Guest Scientist Fellowship 2023 at the HUN-REN Alfred Renyi Institute of Mathematics and while on a Fulbright Scholarship to the University of Birmingham.
The second and third author's research was supported in part by Nemzeti Kutat\'asi Fejleszt\'esi \'es Innov\'aci\'os Hivatal (NKFIH) Grant No. K132696. The third author's research was also supported in part by NKFIH Grant No. K117879.

We are grateful to anonymous referees, who identified errors in a previous version of the manuscript.

\end{document}